\newcommand{\del}{\partial}
\newcommand{\CL}{\mathcal{L}}
\newcommand{\R}{\mathbb{R}}
\renewcommand{\S}{\mathbb{S}}
\newcommand{\Scal}{\operatorname{R}}
\newcommand{\C}{\operatorname{\mathcal{C}}}
\newcommand{\Ric}{\operatorname{Rc}}
\newcommand{\supp}{\operatorname{supp}}
\newcommand{\eps}{\varepsilon}
\newcommand {\mc} {H}
\renewcommand {\H} {\mathcal{H}}
\newcommand {\dist} {\operatorname{dist}}
\newtheorem{theorem}{Theorem}[section]
\newtheorem{lemma}[theorem]{Lemma}
\newtheorem{definition}[theorem]{Definition}
\newtheorem{corollary}[theorem]{Corollary}
\newtheorem{proposition}[theorem]{Proposition}
\title[Large isoperimetric surfaces center]{Large isoperimetric surfaces in
initial data sets}
\author{Michael Eichmair \and Jan Metzger}
\address{Michael Eichmair, ETH Z\"urich, Departement Mathematik, 8092 Z\"urich, Switzerland}
\email{michael.eichmair@math.ethz.ch}
\address{Jan Metzger, Universit\"at Potsdam, Institut f\"ur Mathematik, Am Neuen Palais 10, 14469 Potsdam, Germany}
\email{jan.metzger@uni-potsdam.de}
\thanks{Michael Eichmair gratefully acknowledges the support of the NSF grant DMS-0906038 and of the SNF grant 2-77348-12.}
\date{\today}
\begin{document}

\begin{abstract} We study the isoperimetric structure of asymptotically flat Riemannian $3$--manifolds
$(M, g)$ that are $\C^0$-asymptotic to Schwarzschild of mass $m>0$. 
Refining an argument due to H. Bray we obtain an effective volume comparison theorem 
in Schwarzschild. We use it to show that isoperimetric regions exist in
$(M, g)$ for all sufficiently large volumes, and that they are close to
 centered coordinate spheres. This implies that the volume-preserving stable
constant mean curvature spheres constructed by G. Huisken and S.-T. Yau as well as R. Ye as perturbations of large centered coordinate spheres 
minimize area among all competing surfaces that enclose the same volume. This confirms a
conjecture of H. Bray. Our results are consistent with the uniqueness
results for volume-preserving stable constant mean curvature surfaces in initial data sets obtained by G. Huisken and S.-T. Yau and strengthened by J. Qing and
G. Tian. The additional hypotheses  that the surfaces be spherical and far out in the asymptotic region in their results are not necessary in our work. 
\end{abstract} 

\maketitle


\section{Introduction}

In this paper we describe completely the large isoperimetric surfaces
of asymptotically flat Riemannian $3$-manifolds $(M, g)$ that are
$\C^0$-asympto\-tic to the Schwarzschild metric of mass $m>0$. Such
Riemannian manifolds arise naturally as initial data for the
time-symmetric Cauchy problem for the Einstein equations in general
relativity. For brevity we will refer to such $(M, g)$ as initial data
sets in the introduction.

A special case of the singularity theorem of Hawking and Penrose
asserts that the future spacetime development of time-symmetric
initial data for the Einstein equations that contains a closed minimal
surface is causally incomplete. As a trial for cosmic censorship,
R. Penrose suggested that the area of an outermost minimal surface in
an initial data set should provide a lower bound for the ADM-mass of
the spacetime development of the initial data set. The ``Penrose
inequality" has been established by H. Bray in \cite{Bray:2001} and by
G. Huisken and T. Ilmanen in \cite{Huisken-Ilmanen:2001}. We emphasize
that the outermost minimal surface is known to be outer area-minimizing (in particular, it is strongly stable). This variational
feature is of essential importance in both available proofs of the
Penrose inequality. A deep relation between the existence of stable
minimal surfaces in initial data sets and their ADM-mass has been
recognized and exploited by R. Schoen and S.-T. Yau in
\cite{Schoen-Yau:1979-pmt1} in their proof of the positive energy
theorem. Their work has made a profound connection between the
physical concept of mass and the geometry of manifolds with
non-negative scalar curvature.

It is natural to ask if other physical properties of the spacetime
development of an initial data set $(M, g)$ are captured by its
geometry. Maybe they are witnessed by the existence and behavior of
special surfaces in $(M, g)$, and their behavior? The variational
properties associated with constant mean curvature surfaces in $(M,
g)$ generalize the geometric properties of the horizon in a natural
way.

In \cite{Huisken-Yau:1996}, G. Huisken and S.-T. Yau showed
that the asymptotic region of an initial data set $(M, g)$ that is
$\C^4$-asymptotic to Schwarzschild of mass $m>0$ in the sense of
Definition \ref{def:initial_data_sets} is foliated by strictly volume
--preserving stable constant mean curvatures spheres that are
perturbations of large coordinate balls.  Moreover, these spheres are
unique among volume-preserving stable constant mean curvature
spheres in the asymptotic region that lie outside a coordinate ball of
radius $\mc^{-q}$, where $\mc$ denotes their constant mean curvature
and where $q \in (\frac{1}{2}, 1]$. They also concluded that as the
enclosed volume gets larger, these surfaces become closer and closer
to round spheres whose centers converge in the limit as the volume
tends to infinity to the Huisken-Yau ``geometric center of mass" of
$(M, g)$. See also the announcement
\cite[p. 14]{Christodoulou-Yau:1988}. R. Ye \cite{Ye:1996b} has an alternative approach to proving
existence of such foliations. In \cite{Qing-Tian:2007}, J. Qing and
G. Tian strengthened the uniqueness result of \cite{Huisken-Yau:1996}
by showing the following: every volume-preserving stable constant mean
curvature sphere in an initial data set that is $\C^4$-asymptotic to
Schwarzschild of mass $m>0$ that contains a certain large coordinate
ball (independent of the mean curvature of the surface) belongs to
this foliation.
 
The assumption $m>0$ in the results described in the preceding
paragraph is necessary: the constant mean curvature surfaces of $\R^3$
are neither strictly volume-preserving stable nor unique. In view of
the results in \cite{Huisken-Yau:1996, Qing-Tian:2007}, and loosely
speaking, positive mass has the property that it centers large,
outlying volume-preserving stable constant mean curvature
surfaces. Various extensions of these results that allow for weaker
asymptotic conditions have been proven in \cite{metzger:2007},
\cite{huang:2010}, and \cite{Ma:2011, Ma:2012}. In \cite{huang:2009},
L.-H. Huang has shown that the ``geometric center of mass" of
G. Huisken and S.-T. Yau coincides with other invariantly defined
notions for the center of mass.

In his thesis \cite{Bray:1998}, H. Bray started a systematic
investigation of isoperimetric surfaces in initial data sets and their
relationship with mass, quasi-local mass, and the Penrose
inequality. He showed that the isoperimetric surfaces of Schwarzschild
are exactly round centered spheres. He deduced that the large
isoperimetric surfaces in initial data sets that are compact
perturbations of the exact Schwarzschild metric are also round
centered spheres. Furthermore, he gave a proof of the Penrose
inequality under the additional assumption that there exist connected
isoperimetric surfaces enclosing any given volume in $(M, g)$. This
proof builds on H. Bray's important observation that his isoperimetric
Hawking mass is monotone increasing with the volume in this case. (In
fact, H. Bray pointed out that the Hawking mass is monotone along
foliations through connected volume-preserving stable constant mean
curvature spheres whose area is increasing, such as those constructed
in \cite{Huisken-Yau:1996, Ye:1996b}.) In \cite[p. 44]{Bray:1998},
H. Bray conjectured that the volume-preserving stable constant mean
curvatures surfaces of \cite{Huisken-Yau:1996, Ye:1996b} are
isoperimetric surfaces. The results in the present paper confirm this.

\begin{theorem} \label{thm:main} Let $(M, g)$ be an initial data set that is $\C^0$-asymptotic to Schwarzschild of mass $m>0$ in the sense of Definition \ref{def:initial_data_sets}. There exists $V_0 >0$ such that for every $V \geq V_0$ the infimum in 
  \begin{equation*}
    \begin{split}
      A_g(V) :=  \inf \{ \H^2_g(\del^* \Omega) &: \Omega \text { is a
        Borel set of volume } V \text{ that contains} \\
      &\qquad\text{the horizon and has finite perimeter} \}
    \end{split}
  \end{equation*}
is achieved. Every minimizer has a smooth bounded representative whose boundary consists of the horizon and a connected surface that is close to a centered coordinate sphere.   
\end{theorem}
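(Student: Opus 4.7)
The plan is to establish existence of minimizers by the direct method while ruling out loss of volume to infinity, and then to use the effective volume comparison in Schwarzschild to pin down the structure of the minimizer and show proximity to a centered coordinate sphere.

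I would fix $V$ large and consider a minimizing sequence $\Omega_j$ of Borel sets containing the horizon with $\H^2_g(\del^*\Omega_j) \to A_g(V)$. Sets of finite perimeter with uniformly bounded area are precompact in $L^1_{\text{loc}}$, so a subsequence converges to a limit $\Omega_\infty$ whose volume is $V - V_\infty$ for some $V_\infty \in [0, V]$; here $V_\infty$ represents the volume that has ``escaped to infinity.'' A concentration-compactness type decomposition, combined with the fact that $g$ is essentially Schwarzschild in the asymptotic region, would yield a lower bound for $A_g(V)$ of the form $\H^2_g(\del^*\Omega_\infty) + A_{\text{Schw}}(V_\infty)$ up to negligible error. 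On the other hand, a large centered coordinate sphere of volume $V$ provides an explicit competitor whose area can be computed via the effective Schwarzschild volume comparison, and because $m > 0$ this competitor is strictly cheaper than any such split with $V_\infty > 0$ once $V$ is large. This forces $V_\infty = 0$, so $\Omega_\infty$ realizes the infimum.

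I would then invoke the standard regularity theory for perimeter-minimizing boundaries with an obstacle to conclude that $\del^* \Omega_\infty$ decomposes into a piece of the horizon and a smooth constant mean curvature surface outside. Outer area-minimality of the horizon forces $\Omega_\infty$ to contain it in its entirety. Connectedness of the outer component follows once more from the Schwarzschild comparison: multiple outer components would amount to distributing $V$ among competing centers, and the effective comparison strictly prefers concentration into a single centered region for large $V$.

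Finally, to see that the outer component is close to a centered coordinate sphere, I would apply the quantitative rigidity form of the Schwarzschild comparison: the area of $\del^* \Omega_\infty$ matches that of a centered coordinate sphere of volume $V$ up to $o(1)$, and since centered spheres are the strict minimizers with a coercive deviation estimate, the outer boundary must $C^0$-approximate (after appropriate normalization) a centered coordinate sphere. The principal obstacle I expect is the first step, namely ruling out escape of volume to infinity; without $m > 0$ there is no geometric penalty for sending mass into the asymptotic end, so both the existence of minimizers and their centering are governed by the refined Schwarzschild volume comparison the authors develop.
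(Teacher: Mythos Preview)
Your plan has the right ingredients, but the logical order and one key mechanism differ from the paper in a way that leaves a real gap in your existence step.

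\textbf{Where the argument stalls.} You want to rule out $V_\infty>0$ by comparing the split lower bound $\H^2_g(\del^*\Omega_\infty)+A_{\mathrm{Schw}}(V_\infty)$ against the area of a centered sphere of volume $V$. For this to bite you need a lower bound on $\H^2_g(\del^*\Omega_\infty)$ of the form $A_m(V-V_\infty)$ up to errors that are $o\big((V-V_\infty)^{1/3}\big)$. You never say where this comes from. It is not circularly available from $A_g$, and the effective comparison of Theorem~\ref{thm:effective_volume_comparison} is an \emph{off-center} statement, so it does not by itself give the bare inequality $\H^2_g(\del^*\Omega_\infty)\ge A_m(V-V_\infty)-o(\cdot)$ for an arbitrary limit region. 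Without that, ``$m>0$ makes the split strictly worse'' is an assertion, not an argument.

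\textbf{How the paper closes this.} The paper reverses your order. It first proves centering (Theorem~\ref{thm:centering}) for \emph{any} isoperimetric region of large volume, via a blow--down: if some definite fraction of area sits outside $B_{\tau r}$, the region is $(\tau,\eta)$--off--center and Theorem~\ref{thm:effective_volume_comparison} beats the centered sphere, contradicting minimality; hence the blow--down limit is exactly $B(0,1)$ and smooth convergence upgrades this to proximity to $S_r$. Only then does the paper prove existence (Theorem~\ref{thm:minimizers_exist}). The point is that the Ritor\'e--Rosales decomposition of Proposition~\ref{prop:cut_and_paste_for_minimizing_sequence} gives a local piece $\Omega$ which is \emph{itself isoperimetric for its own volume}, with the escaped piece a Euclidean ball of radius $r$ and with the mean curvature of $\partial\Omega$ equal to $2/r$ when $r>0$. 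One first checks from the profile expansion~\eqref{eqn:expansionprofile} that $\CL^3_g(\Omega)\to\infty$, then applies the already--proven centering theorem to $\Omega$: it is close to a centered sphere of radius $\rho$, so its mean curvature is $\approx 2/\rho$, forcing $r\approx\rho$. One is then comparing two balls of essentially the same radius against a single ball of the combined volume, which is strictly worse by concavity of $V\mapsto V^{2/3}$; hence $r=0$.

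\textbf{Summary of the difference.} The paper does not rule out escape by a direct profile inequality; it rules it out by (i) knowing the local remainder is isoperimetric, (ii) applying centering to the remainder, and (iii) using the mean--curvature matching from the decomposition. Your ``quantitative rigidity'' paragraph is in the right spirit for centering, but the actual mechanism is the blow--down plus the off--center inequality, and that centering step must come \emph{before} existence, not after.
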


In conjunction with \cite{Huisken-Yau:1996}, we immediately obtain the following corollary. 

\begin{corollary} \label{cor:main} If the initial data set $(M, g)$ is
  $\C^4$-asymptotic to Schwarzschild of mass $m>0$ in the sense of
  Definition \ref{def:initial_data_sets}, then the boundaries of the
  large isoperimetric regions of Theorem \ref{thm:main} coincide with
  the volume-preserving stable constant mean curvature surfaces
  constructed in \cite{Huisken-Yau:1996}. In particular, for every
  sufficiently large volume there exists a unique isoperimetric region
  in $(M, g)$ of that volume. The boundaries of these regions foliate
  the complement of a bounded subset of $(M, g)$.
\end{corollary}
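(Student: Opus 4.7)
The plan is to apply the Huisken--Yau uniqueness theorem for volume-preserving stable constant mean curvature spheres to the boundaries of the isoperimetric regions produced by Theorem \ref{thm:main}. First I would note that any isoperimetric minimizer $\Omega_V$ from Theorem \ref{thm:main} has, outside the horizon, a boundary component $\Sigma_V$ that is a smooth, closed, connected constant mean curvature surface; moreover, as the second variation of area under volume-preserving deformations must be nonnegative at any isoperimetric minimizer, $\Sigma_V$ is automatically volume-preserving stable. These are standard consequences of the variational characterization of isoperimetric regions.

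The next step is to verify that $\Sigma_V$ meets the geometric hypotheses of the Huisken--Yau uniqueness theorem. Theorem \ref{thm:main} asserts that $\Sigma_V$ is close to a centered coordinate sphere, and in particular $\Sigma_V$ is a topological sphere. If $r_V$ denotes the radius of that centered coordinate sphere, then the mean curvature $\mc$ of $\Sigma_V$ is approximately $2/r_V$, while $\Sigma_V$ lies at coordinate distance $\approx r_V$ from the origin. For any fixed $q \in (\tfrac{1}{2}, 1)$, one has $\mc^{-q} \sim r_V^{q} \ll r_V$ once $V$ (and hence $r_V$) is large enough, so $\Sigma_V$ lies outside the coordinate ball of radius $\mc^{-q}$ required in the uniqueness result of \cite{Huisken-Yau:1996}.

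Given the $\C^4$-asymptotics assumption of the corollary, the uniqueness theorem of Huisken and Yau then identifies $\Sigma_V$ with a leaf of the foliation they construct. Conversely, the leaves of that foliation are parametrized by their mean curvature (equivalently, by their enclosed volume), and the enclosed volume is a continuous monotone function along the foliation, tending to infinity. Hence for each sufficiently large $V$ there is exactly one leaf enclosing volume $V$, which forces the isoperimetric region of volume $V$ to be unique. The foliation claim is then immediate, since the Huisken--Yau leaves already foliate the complement of a bounded set.

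There is no real obstacle beyond Theorem \ref{thm:main}: the corollary is a matching exercise between the conclusions of Theorem \ref{thm:main} and the hypotheses of \cite{Huisken-Yau:1996}. The only mild subtlety is the jump from $\C^0$- to $\C^4$-asymptotics, but this is built into the hypotheses of the corollary, so the regularity needed to invoke Huisken--Yau is available.
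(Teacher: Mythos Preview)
Your proposal is correct and matches the paper's approach: the authors state that the corollary follows ``in conjunction with \cite{Huisken-Yau:1996}'' from Theorem~\ref{thm:main}, and your write-up simply unpacks this by checking that the isoperimetric boundary $\Sigma_V$ is a volume-preserving stable CMC sphere satisfying the $\mc^{-q}$ location hypothesis of the Huisken--Yau uniqueness theorem. The only point worth making explicit in your uniqueness step is that Theorem~\ref{thm:main} applies to \emph{every} isoperimetric region of large volume, so any two such regions of the same volume must both have boundaries equal to Huisken--Yau leaves, hence to the same leaf.
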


It follows that the isoperimetric profile $A_g(V)$ of $(M, g)$ for
large volumes $V$ is exactly determined. This mirrors the situation in
compact Riemannian manifolds whose scalar curvature assumes its
maximum at a unique point $p$. Under those assumptions, small
isoperimetric regions are known to be perturbations of geodesic balls
centered at $p$. (This follows from \cite{Druet:2002}. See also
\cite[Theorem 2.2]{Johnson-Morgan:2000} and \cite[Corollary
3.12]{Nardulli:2009}.)

G. Huisken has initiated a program where the mass of an initial data
set and the quasi-local mass of subsets of initial data sets are
studied via isoperimetric deficits from Euclidean space. One great
advantage of this approach is that only very low regularity is
required of the initial data set. Theorem \ref{thm:main} identifies
$m$ as the only sensible candidate for any notion of mass that is
defined in terms of $A_g(V)$ when the initial data set is
$\C^0$-asymptotic to Schwarzschild of mass $m>0$,
cf. \cite{Bray:1998}. A result of X.-Q. Fan, Y. Shi, and L.-T. Tam
\cite[Corollary 2.3]{Fan-Shi-Tam:2009} subsequent to the work of
G. Huisken shows that the ADM mass of an initial data set that has
integrable scalar curvature and which is $\C^0$-asymptotic to
Schwarzschild of mass $m>0$ equals $m$.

In a sequel \cite{Eichmair-Metzger:2012b} to this paper, we generalize
our main result Theorem \ref{thm:main} to arbitrary dimensions. We
also show that in Corollary \ref{cor:main}, it is enough to assume
that $(M, g)$ is $\C^2$-asymptotic to Schwarzschild of mass $m>0$. In
Appendix H of \cite{Eichmair-Metzger:2012b} we provide an extensive
overview of the portion of the literature on isoperimetric regions on
Riemannian manifolds related to our results.

\subsection*{Structure of this paper} In Section
\ref{sec:definitions-notation} we introduce the precise decay
assumptions for initial data sets that we use in this paper, and we
define what exactly we mean by isoperimetric and locally isoperimetric
regions. In Section \ref{sec:volume_comparison} we prove an effective
volume comparison theorem for regions in initial data sets that are
$\C^0$-asymptotic to Schwarzschild. In Section \ref{sec:regularity} we
review the classical results on the regularity of isoperimetric
regions and behavior of minimizing sequences for the isoperimetric
problem that we need in this paper. The effective volume comparison
theorem is applied in Section \ref{sec:center} to show that
isoperimetric regions exist for every sufficiently large volume in
initial data sets that are $\C^0$-asymptotic to Schwarzschild, and
that these regions become close to large centered coordinate balls as
their volume increases. In Section \ref{sec:general_asymptotics} we
present our most general result on the behavior of isoperimetric
regions in asymptotically flat initial data sets that are not assumed
to be close to Schwarzschild: either such regions slide away entirely
into the asymptotically flat end of the initial data set as their
volume grows large, or they begin to fill up the whole initial data
set. The results in this section are largely independent of the
remainder of the paper. In Appendix \ref{sec:integral_decay_estimates}
we collect several useful lemmas regarding integrals of polynomially
decaying quantities over surfaces with quadratic area growth. In
Appendix \ref{sec:Bray_thesis} we summarize some steps and results
from H. Bray's thesis. Appendix \ref{sec:minimizing_limit} contains a
``friendly" proof that limits of isoperimetric regions with divergent
volumes in initial data sets have area-minimizing boundaries. This
fact is used in the proof of Theorem \ref{thm:PMT}.

\subsection*{Acknowledgements} We have had helpful and enjoyable
conversations with L. Rosales and B. White regarding the geometric
measure theory used in this paper. We are grateful to H. Bray and
S. Brendle for their interest, feedback, and enthusiasm and to
G. Huisken for his great encouragement, and for sharing with us his
perspective on isoperimetric mass. We are grateful for the referees
for their careful reading, their useful suggestions, and for pointing
out to us Corollary 2.3 in \cite{Fan-Shi-Tam:2009}.


\section{Definitions and notation} \label{sec:definitions-notation}

\begin{definition} \label{def:Schwarzschild} Let $m>0$. We denote by
  $(M_m, g_m)$ the complete Riemannian manifold $(\R^3\setminus \{0\},
  \phi_m^4\sum_{i=1}^3 dx_i^2)$, where $\phi_m = \phi_m(x) := 1 +
  \frac{m}{2r}$, $r = r(x) := \sqrt{x_1^2 + x_2^2 + x_3^2}$, and where
  $(x_1, x_2, x_3)$ are the coordinate functions on $\R^3$. $(M_m,
  g_m)$ is a totally geodesic spacelike slice of the Schwarzschild
  spacetime of mass $m>0$. We refer to $(M_m, g_m)$ as the
  Schwarzschild metric of mass $m>0$ for brevity, to the coordinates
  $(x_1, x_2, x_3)$ as isotropic coordinates on $(M_m, g_m)$, and to
  $r(x)$ as the isotropic radius of $x \in M_m$.
\end{definition}
The conformal factor $\phi_m$ is harmonic on $\R^3 \setminus
\{0\}$. It follows that the scalar curvature of $g_m$ vanishes. The
coordinate spheres $\{ x \in M_m : r(x) = r\} \subset M_m$ will be
denoted by $S_r$. Note that $S_{\frac{m}{2}}$ is a minimal surface. It
is called the horizon of $(M_m, g_m)$. The inversion $x \to \left(
  \frac{m}{2} \right)^2 \frac{x}{r(x)^2} $ induces a reflection
symmetry of $(M_m, g_m)$ across the horizon. The area of the isotropic
coordinate sphere $S_r$ is equal to $\phi_m^4 4 \pi r^2 $. Its mean
curvature with respect to the unit normal $\phi_m^{-2} \partial_r$
equals $\phi_m^{-3} (1 - \frac{m}{2r}) \frac{2}{r}$. The Hawking mass
$m(\Sigma) := (16 \pi)^{-3/2} \sqrt{\H^2_{g_m}(\Sigma)} \left(16 \pi -
  \int_\Sigma \mc^2_\Sigma d \H^2_{g_m} \right)$ which is defined on
closed surfaces $\Sigma \subset M_m$ is equal to $m$ when $\Sigma =
S_r$.

\begin{definition} \label{def:initial_data_sets} An initial data set
  $(M, g)$ is a connected complete Riemannian $3$-manifold, possibly
  with compact boundary, such that there exists a bounded open set $U
  \subset M$ with $M \setminus U \cong_x \R^3 \setminus B(0,
  \frac{1}{2})$ and such that in the coordinates induced by $x = (x_1,
  x_2, x_3)$,
\begin{eqnarray*} 
r |g_{ij} - \delta_{ij}| + r^2 |\partial_k g_{ij}| + r^3 |\partial^2_{k l} g_{ij}| \leq C \text{ where }  r:= \sqrt {x_1^2 + x_2^2 + x_3^2}.
\end{eqnarray*}  
If $\partial M \neq \emptyset$, we assume that $\partial M$ is a
minimal surface, and that there are no compact minimal surfaces in $M$
besides the components of $\partial M$.  The boundary of $M$ is called
the horizon of $(M, g)$.  Given $m > 0$ and an integer $k\geq0$, we
say that an initial data set is $\C^k$-asymptotic to Schwarzschild of
mass $m>0$ if
\begin{eqnarray} \label{eqn:decaygm}
\sum_{l=0}^k  r^{2 + l}|\partial^l(g - g_m)_{ij}| \leq C \text{ where } (g_m)_{ij} = (1 + \frac{m}{2r })^4 \delta_{ij}. 
\end{eqnarray} 
\end{definition}

A few remarks are in order. The decay assumptions for initial data
sets here are quite weak. In particular, the ADM-mass is not defined
for such initial data sets unless a further condition, namely the 
integrability of the scalar curvature, is imposed. 

We extend $r$ as a smooth regular function to the entire initial data
set $(M, g)$ such that $r(U) \subset [0, 1)$, except for the case of
exact Schwarzschild $(M_m, g_m)$, where we retain the convention that
$r(x)$ denotes the isotropic radius introduced just below
Definition \ref{def:Schwarzschild}.  We use $S_r$ to denote the
surface $\{ x \in M : |x| = r\}$, and $B_r$ to denote the region $\{ x
\in M : |x| \leq r\}$. We will refer to $S_r$ as the centered
coordinate sphere of radius $r$. We will not distinguish between the
end $M \setminus U$ of $M$ and its image $\R^3 \setminus
B(0,\frac{1}{2})$ under $x$. By  the work of W. Meeks, L. Simon, and
S.-T. Yau \cite{Meeks-Simon-Yau:1982} (see also the discussion in
\cite[Section 4]{Huisken-Ilmanen:2001}), $M$ is diffeomorphic to
$\R^3$ minus a finite number of open balls whose closures are
disjoint.

Given an initial data set $(M, g)$, we fix a complete Riemannian
manifold $(\hat M, \hat g)$ diffeomorphic to $\R^3$ that contains $(M,
g)$ isometrically. We say that a Borel set $U \subset \hat M$ contains
the horizon if $\hat M \setminus M \subset U$. If such a set $U$ has
locally finite perimeter, we denote its reduced boundary in $(\hat M,
\hat g)$ by $\partial^* U$. Note that $\partial^* U$ is supported in
$M$, and that $\H^2_g (\partial^* U) = \H^2_{\hat g} (\partial^*
U)$. To lighten the notation, we write $\CL^3_g(U) := \CL^3_{\hat g} ( U
\cap M) $ for short.
 
\begin{definition} \label{defn:isoprofile} The isoperimetric area function $A_g : [0, \infty) \to [0, \infty)$ is defined by 
  \begin{equation*} 
    \begin{split}
      A_g(V) :=  \inf \{\H^2_g(\partial^* U) &: U \subset \hat M
      \text{ is a Borel containing the horizon} \\
      &\quad \text{and of finite perimeter with } \CL^3_g(U) = V \}. 
    \end{split}
  \end{equation*} 
  A Borel set $\Omega \subset \hat M$ containing the horizon and of
  finite perimeter such that $\CL^3_{g} (\Omega) = V $ and $A_g(V) =
  \H^2_{g} (\partial^* \Omega)$ is called an isoperimetric region of
  $(M, g)$ of volume $V$. A Borel set $\Omega \subset \hat M$
  containing the horizon and of locally finite perimeter is called
  locally isoperimetric if $\H^2_{g} (B \cap \partial^* \Omega) \leq
  \H^2_{g} (B \cap \partial^* U)$ whenever $B \subset \hat M$ is a
  bounded open subset of $\hat M$ and $U \subset \hat M$ is a Borel
  set containing the horizon and of locally finite perimeter such that
  $\CL^3_{\hat g} (\Omega \cap B) = \CL^3_{\hat g} (U \cap B)$ and
  $\Omega \Delta U \Subset B$.
\end{definition}    

The definition of $A_g$ as well as that of isoperimetric and locally
isoperimetric regions is independent of the particular extension
$(\hat M, \hat g)$ of $(M, g)$. Note that $A_g(0) = \H^2_g(\partial
M)$ and that $A_g(V) > \H^2_g(\partial M)$ for every $V > 0$. The
latter assertion follows from the assumption that the boundary of $M$
is an outermost minimal surface. Locally isoperimetric regions arise
naturally as limits of isoperimetric regions whose volumes diverge. A
good example to keep in mind is a half-space in $\R^3$.  Standard
results in geometric measure theory imply that the boundary of a
(locally) isoperimetric region $\Omega$ is smooth, that $ \Omega
\cap \partial M = \emptyset$ unless the enclosed volume
$\CL^3_g(\Omega) = \CL^3_{\hat g} (\Omega \cap M)$ is $0$, and that
isoperimetric regions are compact. Indications of the proofs of these
facts with precise references to the literature to assist the reader
are given in Section \ref{sec:regularity} below.

The inequalities in the following lemma are well-known, and we recall
them for convenient reference.

\begin{lemma} \label{lem: crude isoperimetric inequality}
Let $(M, g)$ be an initial data set. There exists a constant $\gamma > 0$ such that 
\begin{eqnarray} \label{eqn:relativeSobolev} 
\left( \int_{M} |f|^{\frac{3}{2}} d \CL^3_{g}\right)^{\frac{2}{3}} \leq \gamma \int_{M} |\nabla f| d \CL^3_{g} \text{ for every } f \in \C_c^1(M).
\end{eqnarray}
If the boundary of $M$ is empty, the constant $\gamma >0$ can be chosen such that for any bounded Borel set $\Omega \subset M$ with finite perimeter one has that $$\CL^3_{g} (\Omega)^{\frac{2}{3}} \leq \gamma \H^2_{g}(\partial^* \Omega).$$   
\begin{proof} The Sobolev inequality stated here can be obtained exactly as in \cite[Lemma 3.1]{Schoen-Yau:1979-pmt1} by combining, in a contradiction argument, the Euclidean Sobolev inequality in the form
\begin{equation*}
\left( \int_{\R^3 \setminus B(0, 1)} |f|^\frac{3}{2} d \CL^3_\delta \right)^{\frac{2}{3}} \leq \gamma_0 \int_{\R^3 \setminus B(0, 1)} |\nabla f| d \CL^3_\delta \text{ for all } f \in \C^1_c(\R^3)
\end{equation*}
and Poincar\'e--type inequalities (see
\cite[$\mathsection$8.12]{Lieb-Loss:2001} for the appropriate version
with critical exponent) on precompact coordinate charts. We recall
(cf. \cite[Theorem II.2.1]{Chavel:2001}) that the isoperimetric
estimate for smoothly bounded compact regions $\Omega$ follows from
applying this Sobolev inequality to approximations of the indicator
function $\chi_\Omega$ by Lipschitz functions that are one on $\Omega$
and that drop off to $0$ linearly in the distance from $\Omega$. The
isoperimetric inequality for sets of finite perimeter is obtained by
approximation through smooth sets.
\end{proof}
\end{lemma}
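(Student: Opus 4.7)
The plan is to reduce the Sobolev inequality on $(M,g)$ to the standard Euclidean Sobolev inequality on the asymptotic end, patched with a local Poincaré-type estimate on the core $U$, via a contradiction/compactness argument in the style of Schoen--Yau. First I would fix the coordinate identification $M \setminus U \cong \R^3 \setminus B(0,\tfrac{1}{2})$ and note that on this end the decay assumption in Definition \ref{def:initial_data_sets} implies that $g_{ij}$ and $\delta_{ij}$ are uniformly equivalent as quadratic forms, so that the Euclidean Sobolev inequality on $\R^3 \setminus B(0,1)$ transfers (with an adjusted constant) to the same region measured with respect to $g$. The remaining obstruction is to handle functions that concentrate on the compact region $U$ (including the horizon when $\partial M \neq \emptyset$), where the geometry is merely uniformly bounded.

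For the global inequality I would argue by contradiction: suppose there is a sequence $f_k \in \C^1_c(M)$ normalized so that $\|f_k\|_{L^{3/2}(M)} = 1$ but $\|\nabla f_k\|_{L^1(M)} \to 0$. A standard concentration-compactness analysis combined with the Euclidean inequality on the end forces the mass of $|f_k|^{3/2}$ to concentrate inside a fixed compact neighborhood of $U$. On such a precompact region one has the critical-exponent Poincaré inequality from \cite[$\mathsection$8.12]{Lieb-Loss:2001}, and one can subtract off the local mean of $f_k$; the mean itself is controlled because $f_k$ vanishes at infinity (and can be cut off far out on the end with negligible cost). Pushing this through yields $\|\nabla f_k\|_{L^1}$ bounded below, contradicting the assumption and producing the desired constant $\gamma$.

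Once the Sobolev inequality is established, the isoperimetric inequality for bounded $\Omega$ with finite perimeter and $\partial M = \emptyset$ follows by the standard truncation argument: approximate $\chi_\Omega$ by Lipschitz functions $f_\eps$ equal to $1$ on $\Omega$ and decaying linearly to $0$ in the $g$-distance from $\Omega$ on a thin shell of width $\eps$. Then $\int |f_\eps|^{3/2}\,d\CL^3_g \to \CL^3_g(\Omega)$ and the coarea formula bounds $\int |\nabla f_\eps|\,d\CL^3_g$ by the $g$-perimeter of the enlargement, which tends to $\H^2_g(\partial^* \Omega)$ as $\eps \downarrow 0$ (see \cite[Theorem II.2.1]{Chavel:2001} for the smooth case, with finite-perimeter sets obtained by further smooth approximation).

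The main obstacle I anticipate is the interface between the end and the core: verifying that a minimizing sequence for the Sobolev quotient cannot escape to infinity in a way that defeats the Euclidean inequality, and simultaneously cannot concentrate onto $\partial M$ in a way that defeats the local Poincaré estimate. Both are ruled out by the uniform geometry, but the bookkeeping of cutoffs at the interface $\partial U$ is the technical heart of the proof.
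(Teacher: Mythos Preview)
Your proposal is correct and follows essentially the same approach as the paper: a contradiction argument combining the Euclidean Sobolev inequality on the asymptotic end with the critical-exponent Poincar\'e inequality on the compact core (both you and the paper invoke \cite[$\mathsection$8.12]{Lieb-Loss:2001}), followed by the standard Lipschitz approximation of $\chi_\Omega$ (both citing \cite[Theorem II.2.1]{Chavel:2001}) and smooth approximation for finite-perimeter sets. Your write-up is in fact more detailed than the paper's sketch, which simply points to \cite[Lemma 3.1]{Schoen-Yau:1979-pmt1} for the contradiction argument.
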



\section{Effective refinement of H. Bray's characterization of isoperimetric surfaces in Schwarzschild} \label{sec:volume_comparison}

In his thesis \cite{Bray:1998}, H. Bray proved that large isoperimetric
surfaces of compact perturbations of the Schwarzschild metric with
mass $m>0$ are centered coordinate spheres in isotropic
coordinates. In this section, we refine H. Bray's work to derive an
effective lower bound for the isoperimetric defect of off-centered
surfaces in Schwarzschild. This bound gives us enough
quantitative information to characterize large isoperimetric surfaces
in manifolds that are $\C^0$-asymptotic to Schwarzschild of mass
$m>0$, as we will see in Section \ref{sec:center}.

We begin with a description of the ``volume-preserving" charts used by H. Bray. We refer the reader to Appendix \ref{sec:Bray_thesis} for an overview of related results from H. Bray's thesis that should be noted in this context. 

Let $\alpha >0$. Consider the metric cone $\alpha^{-2} d s^2 + \alpha s^2 g_{\S^2}$ on
$(0, \infty) \times \S^2$. The
sphere $\{c\} \times \mathbb{S}^2$ has area $\alpha 4 \pi c^2$ and
mean curvature $\frac{2 \alpha}{c}$. One can choose $c>0$ and
$\alpha>0$ so that the intrinsic geometry and (constant, outward) mean
curvature of the sphere $\{c\} \times \S^2$ with respect to this cone coincide with
that of the centered sphere $S_r$ (with $r > \frac{m}{2}$) in $(M_m,
g_m)$. Using the remarks below Definition \ref{def:Schwarzschild}, we
see that this requires that 
\begin{eqnarray*}
c^3 &=& r^3 \frac{\phi_m^7}{1 - m/(2r)} = r^3 (1 + \frac{4m}{r}+ O(\frac{1}{r^2})),  \\
\alpha &=& \phi_m^{-\frac{2}{3}} (1 - \frac{m}{2r})^{\frac{2}{3}} = 1 - \frac{2m}{3r} + O(\frac{1}{r^2}). 
\end{eqnarray*}
 Note that $\alpha \in (0, 1)$ and
that $\alpha \nearrow 1$ as $r \to \infty$. We emphasize that $\alpha$
and $c$ are uniquely determined by $r$. The scalar curvature of
this conical metric equals $2 \frac{1 - \alpha^3}{\alpha s^2}$. In
particular, it is positive for $\alpha \in (0, 1)$.

The volume between the sphere $S_r$ of (isotropic) radius $r$ and
the horizon $S_{\frac{m}{2}}$ in Schwarzschild is $4 \pi \int_{\frac{m}{2}}^r (1 +
\frac{m}{2 r})^6 r^2 dr = \frac{4 \pi r^3}{3} (1 + \frac{9 m }{2 r} +
O(\frac{1}{r^2}))$. The volume of the (punctured) disk $(0, c] \times
\S^2$ in the cone metric above equals $\frac{4 \pi c^3}{3} = \frac{4
  \pi r^3}{3} (1 + \frac{4m}{r}+ O(\frac{1}{r^2}))$. We denote the difference between the
Schwarzschild volume and the cone volume by $V_0$. Note that $V_0 =
\frac{4 \pi r^3}{3} \frac{m}{2r} + O(r) = \frac{4 \pi c^3}{3}
\frac{m}{2c} + O(c)$.

Following H. Bray, we represent the part of the Schwarzschild
metric $(M_m, g_m)$ that lies outside the centered sphere of isotropic
radius $r$ in the form $u_c^{-2} ds^2 + u_c s^2 g_{\S^2}$ on $[c,
\infty) \times \S^2$ for some radial function $u_c$. This requires
that $u_c(c) = \alpha$ and $\partial u_c |_c = 0$, and that $u_c$
satisfies a certain second order ordinary differential equation (to make the scalar curvature
vanish). We remark that by Birkhoff's theorem and the constancy of
the Hawking mass along centered spheres in Schwarzschild there is a
first integral for $u_c$.

Finally, let $g_m^c := u_c^{-2} ds^2 + u_c s^2 g_{\S^2}$ be the metric
on $(0, \infty) \times \S^2$ with $u_c(s) =
\alpha$ for $s \leq c$ and $u_c(s)$ is equal to $u_c = u_c(s)$ from
the preceding paragraph when $s \geq c$. To summarize, we have that
$u_c$ is $\C^{1, 1}$, is radial, and is such that the set $[c, \infty)  \times \mathbb{S}^2$ in the $g_m^c$ metric is isometric to
the exterior of a round sphere $S_r$ of isotropic radius $r$ in the
Schwarzschild manifold of mass
$m$, and such that $u_c(s) = \alpha$ for $s \leq c$ for some constant
$\alpha$, such that the boundaries $\{c\} \times \mathbb{S}^2$ and $S_c$ correspond and such that the mean curvature of
$\{c\} \times \mathbb{S}^2$ from the inside (the
conical part) matches that from the outside (in Schwarzschild).

A key feature of this construction used by H. Bray is that the volume element $s^2 ds \wedge
dg_{\S^2}$ of
$g_m^c$ is independent of $c$.
By definition of $V_0$, the Schwarzschild volume between
the horizon and a centered Schwarzschild sphere isometric to the
sphere $\{s\} \times \S^2$ (with $s \geq c$) in $((0, \infty) \times \mathbb{S}^2,
g_{m}^c)$ equals $\frac{4 \pi s^3}{3} + V_0$. Thus its area equals
$A_m( \frac{4 \pi s^3}{3}+V_0)$, where $A_m$ is the function that
assigns to every volume (measured relative to the horizon) the area of
a centered sphere in Schwarzschild that encloses that volume. On the other hand, the
area of $\{s\} \times \S^2$ is given explicitly by $u_c(s) 4\pi s^2
$. In combination this yields the following explicit expression
for $u_c$:
\begin{equation*}
  u_c (s)
  :=
  \frac{A_m(V+V_0)}{(36 \pi)^{\frac{1}{3}} V^{\frac{2}{3}}}
  \nonumber \text{ for all } s \geq c, \text{ where }  V := \frac{4 \pi
s^3}{3}; \text{ cf.  \cite[p. 34]{Bray:1998}}.
\end{equation*}
It is known (and easy to verify) that
\begin{eqnarray*}
  \frac{\H^2_{g_m}(\partial B(0, r))}{(36 \pi)^{\frac{1}{3}} \CL^3_{g_m}( B(0, r)
    \setminus B(0, \frac{m}{2}))^{\frac{2}{3}}}
  =
  1 - \frac{m}{r} + O\left( \frac{1}{r^2} \right)
\end{eqnarray*}
and from this that
\begin{equation} \label{eqn:expansionprofile}
  A_m(V) = 4 \pi R^2 \left( 1 - \frac{m}{R} +
O\left(\frac{1}{R^2}\right)\right)
  \text{ where }
  R := \left(\frac{3 V}{4 \pi} \right)^{\frac{1}{3}}.
\end{equation}
By assumption we have that $u_c(c) = \alpha = 1 - \frac{2 m }{3c} +
O(\frac{1}{c^2})$. For a fixed $\tau \in (1, \infty)$ we are
interested in estimating $u_c ( \tau c) - u_c (c)$. Note that
\begin{eqnarray*}
    u_c (\tau c )
    &=&
    \frac{A_m\left(\frac{4 \pi (\tau c)^3}{3} \left( 1 + \frac{m}{2
         \tau^3 c} + O( \frac{1}{c^2} ) \right) \right)}{4 \pi(\tau c)^2} \\
    &=&
    \left(1 + \frac{m }{2 \tau^3 c} + O\left(\frac{1}{c^2}\right)\right)^{\frac{2}{3}} \left(1 -
    \frac{m}{\tau c} + O\left(\frac{1}{c^2}\right)\right)
    \\[1ex]
    &=&
    1 - \frac{2m}{3c} \left(\frac{3}{2\tau} - \frac{1}{2\tau^3}\right) + O\left(\frac{1}{c^2}\right).
\end{eqnarray*}
This means that for $\tau_0 \in (1, \infty)$ fixed and $\tau \geq \tau_0$ we have that 
\begin{eqnarray} \label{eqn:estimateu} 
u_c (\tau c ) - u_c (c) = u_c(\tau c) - \alpha \geq \frac{1}{2} \frac{(\tau+ \frac{1}{2}) (\tau - 1)^2}{\tau^3}  \frac{2m}{3c} 
\end{eqnarray}
provided that $c$ is sufficiently large (depending only on $m$ and $\tau_0$). This quantifies the fact from \cite{Bray:1998} that $u_c(s)$ is increasing for $s \geq c$; see Appendix \ref{sec:Bray_thesis}.

In the proof of the following lemma, we supply some additional details and in fact make a slightly different claim than \cite[p. 37]{Bray:1998}:
  
\begin{lemma}  [Cf. \protect{\cite[p. 37]{Bray:1998}}] \label{lem: construction of w} Consider the conical part of the
metric $g_{m}^c$ given
  by $\alpha^{-2} ds^2 + \alpha s^2 g_{\S^2}$ on $(0, c) \times \S^2$
  where $\alpha$ and $c$ are such that the outward mean curvature of
  $\{c\} \times \S^2$ with respect to $g_{m}^c$ is the same as that of
  a centered sphere $S_r$ of area $\alpha 4 \pi c^2$ in Schwarzschild
  with mass $m$. Then there exists $s_0 \geq 0$ and a smooth
  radial function $w_c : (s_0, c] \to [1, \infty)$  such
  that $w_c^4 g_m^c$ is isometric to the Schwarzschild metric interior
  to the mean-convex sphere $S_r$, and such that $w_c(c) = 1$ and
  $\partial_s w_c|_c = 0$.
\begin{proof}
  The scalar curvature  $\Scal_{g_m^c} = 2 \frac{1 - \alpha^3}{ \alpha s^2}$ of the conical part of the metric $g_m^c$ is
  strictly positive. For the conformal metric $w_c^4 g_{m}^c$ to be
  isometric to (part) of a Schwarzschild metric, it is necessary that its
  scalar curvature vanishes and hence that $w_c$ is a solution of the
  elliptic (Yamabe) equation $- 8 \Delta_{g_m^c} w_c + \Scal_{g_m^c} w_c =
  0$. This equation reduces to a second order ordinary differential equation if we are solving
  for radial functions. Hence we can solve this equation for $s$ close to $c$ with initial
  data $w_c(c)= 1$ and $\partial_s w_c|_c =0$. By
  Birkhoff's theorem, $w_c^4 g_m^c$ is isometric to (part of) a
  Schwarzschild metric. To determine the mass $\hat m$ of this metric,
  we evaluate its Hawking mass on the sphere $\{c\} \times
  \S^2$. Since the initial data are chosen so that the area and mean
  curvature of this sphere coincide with that of an umbilic constant mean curvature sphere
  of a Schwarzschild metric of mass $m$, we obtain that $\hat m =
  m$. On every connected open sub-interval of $(0, c]$ that
  contains $c$ and on which the solution $w_c$ exists and is
  non-negative, we have that $\Delta_{g_m^c} w_c =
  \frac{1}{s^2} \partial_s (s^2 \alpha^2 \partial_s w_c) = \frac{1}{8}
  R_{g_m^c} w_c \geq 0$. Integrating up and using that $\partial_s w_c|_c
  = 0$, it follows that $\partial_s w_c \leq 0$ on any such interval. Moreover, we see that $w_c(s)$ is a decreasing function of $s$. In particular,
  $w_c \geq 1$ on any such interval. The constancy of the Hawking mass
  is equivalent to the existence of a first integral for the ordinary differential equation satisfied by
  $w_c$.  We let $(s_0, c]$ be the maximally left-extended interval of
  existence of the solution $w_c$. Since the metric $w_c^4 g_m^c$ on $(s_0, c]
  \times \S^2$ is isometric to (part) of a Schwarzschild metric, it
  follows that $w_c \nearrow \infty$ as $s \searrow s_0$ and that we actually
  obtain an isometric copy of the full spatial Schwarzschild metric that lies
  to the mean-concave side of $S_r$.
\end{proof}
\end{lemma}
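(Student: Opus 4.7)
The plan is to obtain $w_c$ by solving the conformal Laplace equation forcing $w_c^4 g_m^c$ to have vanishing scalar curvature, then invoke Birkhoff's theorem to identify the resulting metric as part of a Schwarzschild metric. First I would compute the scalar curvature of the conical metric $g_m^c = \alpha^{-2} ds^2 + \alpha s^2 g_{\S^2}$ on $(0, c) \times \S^2$, yielding $\Scal_{g_m^c} = 2 (1- \alpha^3)/(\alpha s^2) > 0$ since $\alpha \in (0,1)$. For $w_c^4 g_m^c$ to be scalar-flat, $w_c$ must satisfy the Yamabe equation $-8 \Delta_{g_m^c} w_c + \Scal_{g_m^c} w_c = 0$, which for radial $w_c$ reduces via $\Delta_{g_m^c} w_c = s^{-2} \partial_s(s^2 \alpha^2 \partial_s w_c)$ to a linear second-order ODE in $s$. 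Local existence of a smooth radial solution $w_c$ on an interval to the left of $c$ with the prescribed data $w_c(c) = 1$ and $\partial_s w_c|_c = 0$ is then immediate from classical ODE theory.

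Next I would identify $w_c^4 g_m^c$ geometrically. Since it is spherically symmetric, scalar-flat, and three-dimensional, Birkhoff's theorem implies that it is locally isometric to a piece of the spatial Schwarzschild metric of some mass $\hat m$. To pin down $\hat m$, I would evaluate the Hawking mass at the sphere $\{c\} \times \S^2$: the initial conditions $w_c(c) = 1$ and $\partial_s w_c|_c = 0$ ensure that this sphere, viewed inside $w_c^4 g_m^c$, has exactly the same area and outward mean curvature as the centered coordinate sphere $S_r$ of Schwarzschild mass $m$ (this is the matching built into the construction of $\alpha$ and $c$). Since $m(\{c\} \times \S^2) = m$, we conclude $\hat m = m$.

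The third step is to verify that $w_c \geq 1$ for all $s \leq c$ on which the solution exists and remains positive. Rewriting the ODE as $\partial_s(s^2 \alpha^2 \partial_s w_c) = \tfrac{1}{8} \Scal_{g_m^c} s^2 w_c$, the right-hand side is non-negative as long as $w_c > 0$, so $s^2 \alpha^2 \partial_s w_c$ is non-decreasing in $s$; combined with $\partial_s w_c|_c = 0$ this gives $\partial_s w_c \leq 0$ on any such interval, so $w_c$ is decreasing in $s$ and thus $w_c \geq w_c(c) = 1$. In particular the solution cannot cross zero, so standard ODE continuation produces a maximal interval of existence $(s_0, c]$ with $s_0 \geq 0$.

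Finally I would argue that the maximal solution captures exactly the interior Schwarzschild region inside $S_r$. On $(s_0, c] \times \S^2$, Birkhoff again identifies $w_c^4 g_m^c$ with a piece of Schwarzschild of mass $m$, and since $w_c \geq 1$ with $w_c$ increasing as $s$ decreases, the copy obtained lies on the mean-concave side of $\{c\} \times \S^2$; maximality forces $w_c \nearrow \infty$ at $s_0$, which corresponds to the isometric image of the asymptotically flat inverted end of Schwarzschild. (The conservation of Hawking mass along the flow provides a first integral of the ODE that makes this extension analysis transparent.) The main obstacle will be bookkeeping in the Birkhoff identification, specifically verifying that the geometric matching at $s = c$ determines both that $\hat m = m$ and that the copy produced is the mean-concave, not the mean-convex, side of $S_r$; the sign of $\partial_s w_c$ established in the monotonicity step handles this.
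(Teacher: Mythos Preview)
Your proposal is correct and follows essentially the same approach as the paper: solve the radial Yamabe equation with the given initial data, invoke Birkhoff's theorem and the Hawking mass at $\{c\}\times\S^2$ to identify the mass as $m$, use the sign of $\Scal_{g_m^c}$ to obtain $\partial_s w_c \leq 0$ and hence $w_c \geq 1$, and then extend maximally with the Hawking-mass first integral controlling the endpoint behavior. The paper's proof matches yours step for step, including the explicit Laplacian formula and the monotonicity argument.
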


Fix an isotropic sphere $S_r$ in $(M_m, g_m)$, let $g_m^c$ be the metric on
$(0, \infty) \times \mathbb{S}^2$ constructed above, and let $w_c$ be as in Lemma \ref{lem:
construction of w}, extended by $1$ to $s \geq c$, so that $((s_0, \infty) \times \mathbb{S}^2, w_c^4g_m^c)$ is
isometric to $(M_m, g_m)$. We will refer to it as the volume-preserving chart associated with $S_r$. Recall that the isotropic sphere $S_r$ corresponds to
the coordinate sphere $\{c\} \times \mathbb{S}^2$ in $((s_0, \infty) \times \mathbb{S}^2,
w_c^4g_m^c)$.  Finally, let $\Sigma$ be a surface in $((s_0, \infty) \times \mathbb{S}^2, w_c^4g_m^c)$ homologous to the horizon that encloses the same (relative) volume as $\{c\} \times \mathbb{S}^2$. The reader should keep in mind that $\Sigma$ might consist of the horizon itself (enclosing volume zero) and another surface that is the boundary of a compact set that is disjoint from the horizon. In this case the area of the horizon is counted as part of the area of $\Sigma$.  

Since $w_c \geq 1$ it follows that the volume enclosed by $\Sigma$ with respect to the $g_m^c$ metric (and relative to the horizon of the Schwarzschild metric
$w_c^4 g_m^c$ in the same coordinate chart) is at least that enclosed by $\{c\} \times \mathbb{S}^2$.
Note that as quadratic forms, $\alpha^2 g_m^c \leq \delta := ds^2 + s^2 g_{\mathbb{S}^2} \leq u_c^{-1} g_{m}^c$, since $\alpha \leq u_c \leq 1$. 
As in \cite{Bray:1998}, but meticulously recording the error
terms in the computation, we obtain that
\begin{align}  
\H^2_{g_m} (\Sigma) =  \H^2_{ w_c^4 g_m^c} (\Sigma)  &\geq   \H^2_{ g_m^c} (\Sigma) \nonumber
  = \int_\Sigma d \H^2_{g_m^c} \\
  &\geq \int_\Sigma u_c d \H^2_{\delta} \nonumber
  = \int_{\Sigma} (u_c - \alpha) d \H^2_{\delta} + \alpha \int_{\Sigma} d \H^2_{\delta}
  \\
  &\geq \int_{\Sigma} (u_c - \alpha) d \H^2_{\delta} + \alpha \H^2_{\delta} (\{c\} \times \mathbb{S}^2) \nonumber
    \\
  &= \int_\Sigma (u_c - \alpha) d \H^2_{\delta} + \H^2_{g_m} (S_r)  \nonumber \\
    &\geq \alpha^2  \int_\Sigma (u_c - \alpha) d \H^2_{g_m^c} + \H^2_{g_m} (S_r)  \label{eqn: refinement of Bray's estimate}
\end{align}
The third inequality follows from the Euclidean isoperimetric inequality. The definition below is natural in view of this estimate and the expansion of $u_c$ for $s \geq c$. 

\begin{definition}
  \label{def: off-center} Let $(M, g)$ be an initial data set that is $\C^0$-asymptotic to Schwarzschild of mass $m >0$. Let $\Omega$ be a bounded Borel set with finite perimeter in $(M, g)$ that contains the horizon. Given parameters $\tau >1$ and $\eta \in (0, 1)$ we say that such a set $\Omega$ is $(\tau, \eta)$-off-center if
  \begin{enumerate}
  \item $\CL^3_g (\Omega)$ is so large that there exists a coordinate sphere $S_r = \partial B_r$ with $\CL^3_g(\Omega) = \CL^3_g(B_r)$ and $r \geq 1$, and if
  \item $\H^2_g (\partial^* \Omega \setminus B_{\tau r}) \geq \eta \H_g^2 (S_r)$.
  \end{enumerate}
\end{definition}

\begin{figure}
  \centering
  \resizebox{0.5\linewidth}{!}{\input{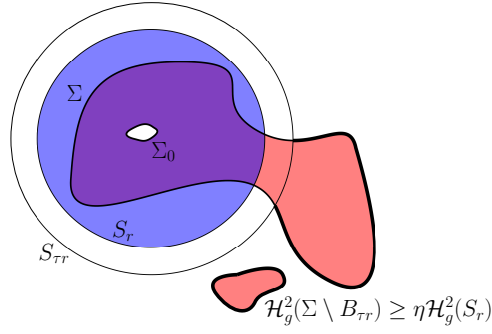}}
  \caption{A large portion of the area of $\Sigma$ lies outside of
    $B_{\tau r}$}
\end{figure}

Let $\Sigma$ be a surface in Schwarzschild containing the horizon and enclosing volume $V$ with it, and let $r \geq \frac{m}{2}$ be such that $\CL^3_{{g_m}} (B_r \setminus B_{\frac{m}{2}}) = V$. Assume that $r$ is large and that $\Sigma$ is $(\tau, \eta)$-off-center. It is easy to see that as $c \to \infty$, the isotropic sphere $S_{\tau r}$ corresponds to $ \{(\tau + o(1)) c\} \times \mathbb{S}^2$ in the volume-preserving chart. 
It follows that for $r$ sufficiently large a portion $\eta \H^2_{g_m}(S_r)$ of the area of $\Sigma$ lies in the region $(\frac{1+\tau}{2} c, \infty) \times \mathbb{S}^2$ of the volume-preserving chart. We can use this
information together with (\ref{eqn:estimateu}), replacing $\tau$ by $(1 + \tau)/2$, to continue the estimate  (\ref{eqn: refinement of Bray's estimate}). We obtain that
\begin{eqnarray} \nonumber
  \H^2_{g_m} (\Sigma)
  &\geq&
  \H^2_{g_m} (S_r) + \frac{ \eta m } {96} \left( 1 -\frac{1}{\tau}  \right)^2
\frac{\H^2_{g_m} (S_r)}{r}  \\ \nonumber
  &\geq&
  \H^2_{g_m} (S_r) + \frac{\eta m \pi}{24}  \left(1 -  \frac{1}{\tau}  \right)^2 r
\end{eqnarray}
for all $r$ sufficiently large,  depending only on $m$ and $\tau$. We have used that $2 \alpha \geq 1$ and $2r \geq c$ for $r$ sufficiently large here, and that $\H^2_{g_m}(S_r) \geq 4 \pi r^2$. 

The arguments leading to this estimate also apply if $\Sigma=\del^* \Omega$ is the reduced boundary of a finite-perimeter Borel set $\Omega$ containing the horizon. 
\begin{proposition} [Effective volume comparison in
  Schwarzschild] \label{prop:effectiveSchwarzschild} For $m>0$ and
  $(\tau, \eta) \in (1, \infty) \times (0, 1)$ there exists $V_0 >0$
  with the following property: Let $V \geq V_0$ and $r \geq
  \frac{m}{2}$ such that $V= \CL^3_{g_m} (B_r \setminus
  B_{\frac{m}{2}})$, and let $\Omega \subset \R^3$ be a bounded finite
  perimeter Borel set such that $B_{\frac{m}{2}} \subset \Omega$ and
  $\CL^3_{g_m}(\Omega \setminus B_{\frac{m}{2}}) = V$. If $\Omega$ is
  $(\tau, \eta)$-off-center, i.e. if $\H^2_{g_m} ( \partial^* \Omega
  \setminus B_{\tau r} ) \geq \eta \mathcal{H}_{g_m} (S_r)$, then
\begin{equation} \label{eqn: comparison in Schwarzschild}
  \H^2_{g_m}(\partial^* \Omega) \geq \H^2_{g_m} (S_r) + \frac{\eta m \pi}{24} \left(1 -  \frac{1}{\tau}\right)^2  r.
\end{equation}
\end{proposition}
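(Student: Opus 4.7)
The argument for Proposition \ref{prop:effectiveSchwarzschild} is essentially already laid out in the discussion immediately preceding its statement; my plan is to organize those computations into a clean proof and make the convergence claims precise.

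First I would set up the volume-preserving chart: choose $c > 0$ and $\alpha \in (0,1)$ so that the sphere $\{c\} \times \mathbb{S}^2$ in the cone $\alpha^{-2} ds^2 + \alpha s^2 g_{\mathbb{S}^2}$ has the same intrinsic geometry and mean curvature as $S_r$ in $(M_m, g_m)$; then form the metric $g_m^c$ as in the text and invoke Lemma \ref{lem: construction of w} to produce $w_c \geq 1$ with $((s_0, \infty) \times \mathbb{S}^2, w_c^4 g_m^c)$ isometric to $(M_m, g_m)$ and $\{c\} \times \mathbb{S}^2$ corresponding to $S_r$. Pull $\Omega$ back through this isometry. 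Since $w_c \geq 1$ everywhere and equals $1$ on $[c, \infty) \times \mathbb{S}^2$, the $g_m^c$-volume enclosed by $\partial^* \Omega$ (relative to the horizon of the conformal model) is at least the Schwarzschild volume $V$, which equals the volume enclosed by $\{c\} \times \mathbb{S}^2$ in $g_m^c$. This is the crucial use of $w_c \geq 1$: it allows us to compare volumes in the $g_m^c$-metric rather than the conformal one.

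Next I would run the chain of inequalities displayed in \eqref{eqn: refinement of Bray's estimate}: pass from $\H^2_{g_m}(\partial^*\Omega)$ to $\H^2_{g_m^c}(\partial^*\Omega)$ using $w_c \geq 1$; convert the $g_m^c$-area integrand to the Euclidean-area integrand $u_c \, d\H^2_\delta$ using that $g_m^c \geq u_c^{-1} \delta$ as quadratic forms; split the resulting integral as $\int (u_c - \alpha)\, d\H^2_\delta + \alpha \int d\H^2_\delta$; apply the Euclidean isoperimetric inequality in $\mathbb{R}^3 \setminus B_{\frac{m}{2}}$ to the second term using that the $g_m^c$-enclosed volume dominates that of $\{c\} \times \mathbb{S}^2$; and finally revert to a $g_m^c$-area integral in the first term using $\alpha^2 g_m^c \leq \delta$. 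This yields
\begin{equation*}
\H^2_{g_m}(\partial^* \Omega) \geq \H^2_{g_m}(S_r) + \alpha^2 \int_{\partial^* \Omega} (u_c - \alpha)\, d\H^2_{g_m^c}.
\end{equation*}

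The remaining work is to exploit the off-center hypothesis to lower-bound the integral. I would first show that as $r \to \infty$, the Schwarzschild coordinate sphere $S_{\tau r}$ corresponds in the chart to $\{(\tau + o(1))c\} \times \mathbb{S}^2$; this follows from the explicit relation $c^3 = r^3 \phi_m^7/(1 - m/(2r))$ applied to $\tau r$ and inverted. Consequently, for $r$ sufficiently large (depending only on $m$ and $\tau$), the region $\{x : |x| \geq \tau r\}$ is contained in $\{s \geq \frac{1+\tau}{2} c\} \times \mathbb{S}^2$, so at least $\eta \H^2_{g_m}(S_r)$ of the $g_m^c$-area of $\partial^* \Omega$ lies in $\{s \geq \frac{1+\tau}{2} c\}$. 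On that region, since $u_c$ is monotone increasing in $s$ for $s \geq c$ (Lemma \ref{lem: construction of w}), estimate \eqref{eqn:estimateu} applied with parameter $\frac{1+\tau}{2}$ in place of $\tau$ gives a pointwise lower bound for $u_c - \alpha$ of the form $\text{const}(\tau) \cdot \frac{m}{c}$. Combining these, using $\alpha^2 \geq \frac{1}{2}$, $c \leq 2r$, and $\H^2_{g_m}(S_r) \geq 4\pi r^2$ for large $r$, and simplifying the $\tau$-dependent constant into the form $\frac{1}{24}(1 - 1/\tau)^2$, delivers \eqref{eqn: comparison in Schwarzschild}.

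The main obstacle is bookkeeping rather than conceptual difficulty: one must carefully track that the transition from $|x| \geq \tau r$ in isotropic coordinates to $s \geq \frac{1+\tau}{2} c$ in the volume-preserving chart holds uniformly for $r$ large, and that the arithmetic collapsing of constants produces precisely the advertised $\frac{\eta m \pi}{24}(1 - 1/\tau)^2 r$. All of the analytic ingredients (the lower bound for $u_c - \alpha$, monotonicity of $u_c$, the Euclidean isoperimetric inequality, and $w_c \geq 1$) are already in hand.
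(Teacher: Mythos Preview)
Your proposal is correct and follows essentially the same route as the paper: the discussion preceding the proposition \emph{is} the proof, and you have organized it faithfully. One small correction: the monotonicity of $u_c$ on $[c,\infty)$ is not the content of Lemma~\ref{lem: construction of w} (which concerns $w_c$); it is recalled from Bray's thesis in Appendix~\ref{sec:Bray_thesis}, and you need it to pass from the pointwise estimate \eqref{eqn:estimateu} at $s=\tfrac{1+\tau}{2}c$ to a lower bound on all of $\{s\geq \tfrac{1+\tau}{2}c\}$. Also, the Euclidean isoperimetric inequality is applied in the volume-preserving chart $((0,\infty)\times\mathbb{S}^2,\delta)$, not in isotropic coordinates on $\mathbb{R}^3\setminus B_{m/2}$; the point is that the $g_m^c$-volume element coincides with the Euclidean one there.
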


This is our effective refinement of H. Bray's argument in exact Schwarzschild. The study of effective isoperimetric inequalities is classical with much recent activity, see e.g. \cite{Fusco-Maggi-Pratelli:2008, Figalli-Maggi-Pratelli:2010, Cicalese-Leonardi:2010}. The effective volume comparison in Proposition \ref{prop:effectiveSchwarzschild} is not obtained from lifting an effective isoperimetric inequality from the Euclidean background. It depends on the particular form of the Schwarzschild metric in an essential way. 

In the proof of the theorem below, we will appreciate that we can quantify how much an off-center surface in Schwarzschild falls short of being isoperimetric. The defect is large enough for us to carry out the comparison on arbitrary initial data sets that are $\C^0$-asymptotic to Schwarzschild of mass $m>0$:
    
\begin{theorem} \label{thm:effective_volume_comparison} Let $(M, g)$ be an initial data set that is $\C^0$-asymptotic to Schwarzschild of mass $m >0$. For every tuple $(\tau, \eta) \in (1, \infty) \times (0, 1)$ and constant $\Theta >0$ there exists a constant $V_0 >0$ with the following property: Let $\Omega$ be a bounded finite-perimeter Borel set containing the horizon with $\CL^3_g(\Omega) \geq V_0$ that is $(\tau, \eta)$-off-center and such that $\H^2_g(\partial^* \Omega) ^{\frac{1}{2}} \CL^3_g(\Omega)^{-\frac{1}{3}} \leq \Theta$ and $\H^2_g(B_\sigma \cap \partial^* \Omega) \leq \Theta \sigma^2$  for all $\sigma \geq 1$. Then 
\begin{eqnarray} \label{mainestimate}
\H^2_g(S_r) + \frac{\eta m \pi}{300}  \left(1 -  \frac{1}{\tau}  \right)^2 r \leq \H^2_g(\partial^* \Omega).
\end{eqnarray} 
Here, $S_r \subset M$ is the centered coordinate sphere that encloses $g$-volume $\CL_g^3(\Omega)$ with the horizon.
\end{theorem}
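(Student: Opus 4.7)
The plan is to transplant $\Omega$ to the exact Schwarzschild manifold via the asymptotic chart, apply Proposition~\ref{prop:effectiveSchwarzschild} there, and transfer the resulting estimate back to $(M,g)$. The $\C^0$-asymptotic bound $|g-g_m| = O(r^{-2})$, combined with the quadratic area hypothesis $\H^2_g(B_\sigma \cap \partial^*\Omega) \leq \Theta\sigma^2$, will keep all transplantation errors of lower order than the defect of size $\tfrac{\eta m\pi}{24}(1-1/\tau)^2 r$ furnished by Proposition~\ref{prop:effectiveSchwarzschild}. Concretely, fix $\rho \geq 1$ large enough that $U \subset B_\rho$ and $\rho > m/2$, and define
\begin{equation*}
  \hat\Omega := B_\rho \cup \bigl(\Omega \cap \{r > \rho\}\bigr) \subset \R^3
\end{equation*}
using the asymptotic identification. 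Then $\hat\Omega$ is a bounded finite-perimeter Borel set containing $B_{m/2}$, and $\partial^*\hat\Omega$ agrees with $\partial^*\Omega \cap \{r > \rho\}$ outside the fixed sphere $S_\rho$.

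Writing $V := \CL^3_g(\Omega)$ and $r$ for the radius defined by $\CL^3_g(B_r) = V$, the decay $|g - g_m| = O(r^{-2})$ together with cubic volume growth yields
\begin{equation*}
  |\CL^3_g(\Omega) - \CL^3_{g_m}(\hat\Omega \setminus B_{m/2})| + |\CL^3_g(B_r) - \CL^3_{g_m}(B_r \setminus B_{m/2})| \leq C(1 + r)
\end{equation*}
for a constant $C$ depending on $m$, $\rho$, and $\Theta$. Defining $r'$ by $\CL^3_{g_m}(B_{r'} \setminus B_{m/2}) = \CL^3_{g_m}(\hat\Omega \setminus B_{m/2})$ and using that the Schwarzschild volume element is comparable to $4\pi R^2\, dR$ at $R=r$, this forces $r' = r + O(r^{-1})$, and hence $\H^2_{g_m}(S_{r'}) = \H^2_g(S_r) + O(1)$. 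The quadratic area hypothesis together with the integral decay estimates of Appendix~\ref{sec:integral_decay_estimates} gives
\begin{equation*}
  \int_{\partial^*\Omega \cap \{r > \rho\}} |g - g_m|\, d\H^2_{g_m} \leq C \log r,
\end{equation*}
so that $\bigl|\H^2_g(\partial^*\Omega) - \H^2_{g_m}(\partial^*\hat\Omega)\bigr| \leq C(1 + \log r)$.

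Set $\tau' := (1+\tau)/2 \in (1, \tau)$. For $r$ large enough, $\tau' r' \leq \tau r$, so the area comparison gives
\begin{equation*}
  \H^2_{g_m}(\partial^*\hat\Omega \setminus B_{\tau' r'}) \geq \H^2_g(\partial^*\Omega \setminus B_{\tau r}) - C(1+\log r) \geq \eta\, \H^2_g(S_r) - C(1+\log r) \geq \tfrac{\eta}{2} \H^2_{g_m}(S_{r'}),
\end{equation*}
so that $\hat\Omega$ is $(\tau', \eta/2)$-off-center in Schwarzschild. Applying Proposition~\ref{prop:effectiveSchwarzschild} with parameters $(\tau', \eta/2)$ yields
\begin{equation*}
  \H^2_{g_m}(\partial^*\hat\Omega) \geq \H^2_{g_m}(S_{r'}) + \tfrac{\eta m\pi}{48}\bigl(1 - \tfrac{1}{\tau'}\bigr)^2 r',
\end{equation*}
and since $(1 - 1/\tau')^2 = ((\tau-1)/(\tau+1))^2 \geq \tfrac{1}{4}(1 - 1/\tau)^2$, translating every term back through the comparisons above and absorbing the $O(1 + \log r)$ errors into the linear-in-$r$ defect for $r$ large gives the claimed inequality with the loose constant $\tfrac{\eta m\pi}{300}$.

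The most delicate step is the transfer of the off-center property to $\hat\Omega$: the hypothesis only controls $\H^2_g(\partial^*\Omega \setminus B_{\tau r})$, and one must shrink the exclusion radius to the nearby $\tau' r'$ and pass from $g$ to $g_m$ without losing more than a constant fraction of this area. The quadratic area bound is essential both for making the perimeter transplantation error only logarithmic in $r$ and for ensuring that the baseline area $\H^2_g(S_r) \sim 4\pi r^2$ dominates the $O(1 + \log r)$ transplantation error by a factor of order $r^2 / \log r$, which is what makes the loss $\eta \to \eta/2$ acceptable.
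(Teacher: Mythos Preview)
Your proof is correct and follows essentially the same transplant--compare--transfer approach as the paper: fill in a fixed coordinate ball to obtain a Schwarzschild competitor, compare volumes and perimeters using the $O(r^{-2})$ decay of $g-g_m$, verify that the transplanted set is $\bigl(\tfrac{1+\tau}{2},\tfrac{\eta}{2}\bigr)$-off-center, apply Proposition~\ref{prop:effectiveSchwarzschild}, and absorb the lower-order errors into the linear defect. One small point: the $O(\log r)$ bound you claim for $\int_{\partial^*\Omega\cap\{r>\rho\}} r^{-2}\,d\H^2$ does not follow directly from Corollary~\ref{cor:surface_comparison} (which only gives $O(\H^2_g(\partial^*\Omega)^{\beta/2})$ for $\beta>0$); it requires a direct layer-cake argument combining the quadratic growth hypothesis with the total area bound $\H^2_g(\partial^*\Omega)\lesssim r^2$. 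The paper instead uses the cruder H\"older-based bounds $O(\H^2_g(\partial\Omega)^{1/4})$ for perimeter and $O(\CL^3_g(\Omega)^{1/2})$ for volume from Appendix~\ref{sec:integral_decay_estimates}, which equally suffice.
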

{\it Remark: } The form of the constant that multiplies $r$ in (\ref{mainestimate}) is immaterial. The explicit expression is given to indicate the dependence on the parameters. 
\begin{proof} 
For ease of exposition, we only consider smooth regions $\Omega$. The result for sets of finite perimeter follows by approximation. By Lemma \ref{lem: crude isoperimetric inequality}, $\H^2_g (\partial \Omega) \to \infty$ as $\CL^3_g(\Omega) \to \infty$. Note also that $\CL^3_g(\Omega)=  \frac{4 \pi r^3}{3} + O(r^2)$. 

We break the argument into several steps:
\begin{enumerate} [(a)]  
\item Let $\tilde \Omega := \Omega \cup B_{1} \subset M$. Let $\tilde \Omega_m :=
\left( x (\Omega \setminus B_{1})  \cup B(0, 1) \right) \setminus B(0, \frac{m}{2})$ be the corresponding region in Schwarzschild.
\item Note that $\CL^3_g(\tilde \Omega) = \CL^3_g(\Omega) + O(1)$ and $\H^2_g(\partial \tilde \Omega) = \H^2_g (\partial \Omega) + O(1)$. Moreover, $\tilde \Omega$ satisfies $\H^2_g (B_\sigma \cap \partial \tilde \Omega) \leq \tilde \Theta \sigma^2$ for all $\sigma \geq 1$ where $\tilde \Theta$ depends only on $\Theta$ and $(M, g)$.  
\item By Corollary \ref{cor:surface_comparison} with $\beta =
  \frac{1}{2}$, 
  \begin{equation*}\H^2_{g_m} (\partial \tilde \Omega_m) \leq \H^2_g(\partial \tilde \Omega) +
O(\H^2_g(\partial \tilde \Omega)^{\frac{1}{4}}) \leq \H^2_g(\partial \Omega)
+ O (\H^2_g (\partial \Omega)^{\frac{1}{4}}).\end{equation*}
\item \label{item4} By Lemma \ref{lem:volume_comparison} with $\alpha = \frac{3}{2}$, $\CL^3_{g_m} (\tilde \Omega_m) = \CL^3_g(\Omega) +
O(\CL_g^3(\Omega)^{\frac{1}{2}})$. 

\item \label{item5} By Lemma \ref{lem:volume_comparison} with $\alpha = \frac{3}{2}$ and choice of $r$, $\CL^3_{g_m} (B_r \setminus B_{\frac{m}{2}}) =  \CL^3_{g_m} (B_r \setminus B_1) + O(1) = \CL^3_g (B_r \setminus B_1) +
O(\CL^3_g(B_r \setminus B_1)^{\frac{1}{2}}) = \CL^3_g (\Omega) + O(\CL^3_g(\Omega)^{\frac{1}{2}})  $. 

\item By (\ref{item4}) and (\ref{item5}) and choice of $r$ we have that $\CL^3_{g_m} (\tilde \Omega_m) = \CL^3_{g_m} (B_r \setminus B_{\frac{m}{2}}) + O(r^{\frac{3}{2}})$. Let $\tilde r$ be such that $\CL^3_{g_m} (\tilde \Omega_m) = \CL^3_{g_m} (B_{\tilde r} \setminus B_{\frac{m}{2}})$. Then $\tilde r = r + O (r^{- \frac{1}{2}})$. 

\item The Schwarzschild region $\tilde \Omega_m \subset M_m$ is $(\frac{1 + \tau}{2}, \frac{\eta}{2})$-off-center provided that $\CL^3_g(\Omega)$ is sufficiently large. Hence 
  \begin{equation*}
    A_m(\CL^3_{g_m}(\tilde \Omega_m)) +  \frac{\eta m \pi}{192}  \left(1 -  \frac{1}{\tau}  \right)^2 \tilde r \leq \H^2_{g_m} (\partial \tilde \Omega_m)
  \end{equation*}
by (\ref{eqn: comparison in Schwarzschild}).

\item $\H_{g_m}^2 (S_r) = A_m (\CL^3_{g_m} ( B_r \setminus B_{\frac{m}{2}})) \leq
A_m(\CL^3_{g_m}(\tilde \Omega_m)) +  O(\CL_g^3(\Omega)^{\frac{1}{6}})$ where the inequality follows by explicit computation (using (\ref{eqn:expansionprofile})) from 
\begin{equation*}
  \CL^3_{g_m} (B_r \setminus B_\frac{m}{2}) = \CL^3_g (\Omega) +
  O(\CL^3_g(\Omega)^{\frac{1}{2}}).
\end{equation*}
\item $\H^2_g(S_r) \leq \H^2_{g_m} (S_r) + O(1)$. This is obvious.

\item $\H^2_g (S_r) \leq \H^2_g(\partial \Omega) -  \frac{\eta m \pi}{200}  \left(1 -  \frac{1}{\tau}  \right)^2 r+ O(\CL^3_g(\Omega)^{\frac{1}{6}}) + O(\H^2_g(\partial \Omega)^{\frac{1}{4}})$.

\end{enumerate}
The conclusion follows from this since $\H^2_g(\partial \Omega) ^{\frac{1}{2}} \CL^3_g(\Omega)^{-\frac{1}{3}} \leq \Theta$.
\end{proof}


\section{Regularity of isoperimetric regions and the behavior of minimizing sequences} \label{sec:regularity}

In this section, we review the regularity theory for minimizers of area under a volume constraint in the presence of a smooth obstacle. The results discussed here are well-known and can be deduced from classical sources. For completeness and clarity, and because we have not been able to find a reference that includes our set up here completely, we supply a detailed outline of the argument along with further references, where more details on specific parts of the argument can be found. 

We consider an initial data set $(M, g)$ and its extension $(\hat M, \hat g)$ to a complete boundaryless Riemannian $3$--manifold, as in Section \ref{sec:definitions-notation}. Recall that the horizon $\partial M$, if non-empty, is the outermost minimal surface of $\hat M$.

\begin{proposition} An isoperimetric region containing the horizon has smooth, compact boundary. If this boundary intersects the horizon, then they coincide. 

\begin{proof} We first discuss the regularity of the reduced boundary $\partial^* \Omega$ away from the coincidence set $\supp (\partial^* \Omega) \cap \partial M$. 

A complete proof that $\partial^*\Omega$ has constant mean curvature away from the coincidence set is given in \cite[Proposition 2.1] {Duzaar-Steffen:1992}. This puts the monotonicity formula at one's disposal, and standard regularity analysis (see e.g. \cite[Theorem 2.5] {Duzaar-Steffen:1992}, which eventually refers to the classical paper \cite{Gonzalez-Massari-Tamanini:1983}) applies. The key points here are that there is no mass loss in the convergence (as sets of locally finite perimeter) of blow up sequences of $\partial^*\Omega$ at a point $x \in \supp(\partial^*\Omega) \setminus \partial M$, implying in conjunction with the monotonicity formula that the limiting objects are tangent cones, and that these tangent cones are area-minimizing boundaries and thus planes. In other words, the volume constraint scales away in the blow up limit. The proof of both these points proceeds as in the case of area-minimizing boundaries, cf. \cite{GMT}, applying for example the argument in \cite[Lemma 2.1]{Giusti:1981} to make effective use of the isoperimetric property of $\partial^*\Omega$. The regularity of $\partial^*\Omega$ near $x$ then follows at once from Allard's theorem. See also e.g. \cite{Almgren:1976, Gonzalez-Massari-Tamanini:1983, Morgan:2003} for alternative ways of arguing this step.

That $\partial^* \Omega$ is compact follows in a standard way from the monotonicity formula (see e.g. \cite[Lemma 10]{Brendle-Eichmair:2011}) and an explicit bound on $\H^2_g (\partial^* \Omega)$ that can be obtained from comparison; cf. \cite[Corollary 2.4]{Duzaar-Steffen:1992}. 

The regularity of $\partial^*\Omega$ along the horizon $\partial M$
follows from \cite{Giusti:1981}, \cite{Tamanini:1982}, and
\cite{Gonzalez-Massari-Tamanini:1983}, see also \cite[Theorem
1.3]{Huisken-Ilmanen:2001} and the references provided there. Again,
we outline the key points. We may assume that $\CL^3_g(\Omega \cap M)
>0$. Using that $\partial^*\Omega$ contains regular points, one
concludes that $\partial^*\Omega$ is almost minimizing in $\hat M$
(i.e. across the horizon) without volume constraint. It follows as
above that the mean curvature of $\partial^* \Omega$ is defined and
bounded along the coincidence set, that there is no mass loss in the
convergence of tangent blow up sequences at points in the coincidence
set, that the limits are cones, and finally that these cones are area-minimizing and thus planes. Hence $\partial^*\Omega$ is a $\C^{1,
  \alpha}$ surface near $\partial M$. 

The next step is to argue that the constant mean curvature of
$\partial^* \Omega$ away from the coincidence set, $H$, is
non-negative. If $H < 0$, then one could take the minimal area
enclosure of $\Omega$ in $M$ and use the same argument as above to
show that it is a smooth minimal surface away from the coincidence set
of $\partial^* \Omega$ with the horizon, where it is a priori only
$\mathcal{C}^{1, \alpha}$; cf. \cite[Theorem 1.3
(ii)]{Huisken-Ilmanen:2001}. The minimal area enclosure of $\Omega$ is
weakly mean-convex. The Harnack inequality shows that its components
either coincide with components of the horizon, or are disjoint from
the horizon. The latter scenario (for any component) contradicts our
assumption that the horizon is the outermost minimal surface in
$M$. We see that $H > 0$ unless $\partial \Omega = \partial M$. A
first variation argument shows that $\partial^*\Omega$ is weakly
mean-convex along the coincidence set. Again, we can conclude from the
Harnack inequality that the coincidence set is either empty or that
$\partial^*\Omega = \partial M$.
\end{proof}
\end{proposition}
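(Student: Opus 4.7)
My plan is to split the argument into interior regularity of $\partial^* \Omega$, regularity up to the horizon, the dichotomy for the coincidence set, and a separate compactness claim.

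\textbf{Interior regularity.} Away from $\partial M$, $\Omega$ is a perimeter minimizer subject to a volume constraint. A first variation argument shows that $\partial^* \Omega$ has weakly constant mean curvature in $M \setminus \partial M$: pairing any two compactly supported variations whose volume first variations do not both vanish, one combines them to obtain volume-preserving competitors and concludes that the area first variation is proportional to the volume first variation. With constant mean curvature in hand, the monotonicity formula applies, and blow-ups at any $x \in \supp(\partial^* \Omega) \setminus \partial M$ converge without loss of mass to tangent cones that minimize area without constraint, since the volume constraint scales away. In dimension three such cones must be planes, and Allard's regularity theorem gives that $\partial^* \Omega$ is a smooth constant-mean-curvature surface near $x$.

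\textbf{Regularity at the horizon.} At $x \in \supp(\partial^* \Omega) \cap \partial M$ the key idea is to ``trade off'' the volume constraint at a distant interior regular point $y$ supplied by the previous step. Any local competitor near $x$ that changes the enclosed volume can be compensated by a small normal graph over $\partial^* \Omega$ near $y$; since the area cost of this compensation is Lipschitz in the traded volume, $\partial^* \Omega$ is an almost-minimizer of area without volume constraint near $x$, with $\partial M$ as a one-sided obstacle. The classical obstacle-problem regularity theory of Giusti, Tamanini, and Gonz\'alez--Massari--Tamanini then yields that $\partial^* \Omega$ is $\mathcal{C}^{1,\alpha}$ up to $\partial M$, and in particular that tangent cones at coincidence points are planes parallel to $\partial M$.

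\textbf{Coincidence dichotomy and compactness.} The constant interior mean curvature $H$ of $\partial^* \Omega$ must be non-negative: were $H<0$, replacing $\Omega$ by its minimal-area outer enclosure in $M$ would produce a weakly mean-convex smooth boundary, whose components by the strong maximum principle either coincide with components of $\partial M$ or are disjoint from them; the outermost minimal surface hypothesis rules out the disjoint alternative and forces the enclosure to reduce to $\partial M$, contradicting $\CL^3_g(\Omega) > 0$. Granting $H \geq 0$, a first variation at the coincidence set shows weak mean convexity there, and the Harnack inequality on each component gives that the coincidence set is either empty or all of $\partial^* \Omega$. Finally, compactness of $\partial^* \Omega$ follows from the monotonicity formula, which yields a uniform lower density bound, combined with an a priori upper bound on $\H^2_g(\partial^* \Omega)$ obtained by comparing $\Omega$ with a coordinate annular region enclosing the same volume.

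The main obstacle, in my view, is the regularity at the horizon: the combination of a volume constraint with a one-sided obstacle is not directly covered by classical obstacle-problem theory, and one must implement the distant ``volume trade'' reduction carefully, ensuring that the compensating competitors are admissible (in particular, still contain the horizon) and that the resulting almost-minimality constant is uniform in the scale at $x$.
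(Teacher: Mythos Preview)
Your proposal is correct and follows essentially the same approach as the paper: interior regularity via constant mean curvature and Allard, almost-minimality at the horizon by trading volume at a distant regular point (which is precisely what the paper means by ``using that $\partial^*\Omega$ contains regular points''), the $H \geq 0$ step via the minimal area enclosure together with the outermost hypothesis, the Harnack dichotomy for the coincidence set, and compactness from monotonicity plus a comparison bound on $\H^2_g(\partial^*\Omega)$. The obstacle you single out is exactly the one the paper flags, and both accounts defer the technical details to the cited obstacle-problem literature (Giusti, Tamanini, Gonz\'alez--Massari--Tamanini).
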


We also want to understand the behavior of general minimizing sequences in initial data sets. The following proposition is a slight extension of a special case of \cite[Theorem 2.1]{Ritore-Rosales:2004}, see also \cite{Duzaar-Steffen:1992, Bray:1998} and the remarks below. 

\begin{proposition} \label{prop:cut_and_paste_for_minimizing_sequence} 
Given $V > 0$, there exists an isoperimetric region $\Omega \subset \hat M$ containing the
horizon and a radius $r \in [0, \infty)$ such that $\CL^3_g(\Omega) + \frac{4 \pi r^3}{3} = V$ and such that $\H^2_g(\partial \Omega) + 4 \pi r^2 = A_g(V)$. If $r>0$ and $\CL^3_g(\Omega) >0$, then the mean curvature of $\partial \Omega$ equals $\frac{2}{r}$. 

\begin{proof} By \cite[Theorem 2.1]{Ritore-Rosales:2004} and a simple rescaling argument, there exists an isoperimetric region $\Omega$ containing the horizon and a sequence of finite-perimeter Borel sets $\Omega_i$ diverging to infinity such that $\Omega \cap \Omega_i = \emptyset$, $\CL^3_g(\Omega) + \CL^3_g(\Omega_i) = V$, and $\H^2_g(\partial \Omega) + \lim_{i \to \infty} \H^2_g(\partial^* \Omega_i) = A_g(V)$. Applying the Euclidean isoperimetric inequality with a small fudge factor that tends to $1$ as $i \to \infty$ to the sets $\Omega_i$, we see that the sets $\Omega_i$ can be replaced by coordinate balls $B(p_i, r_i)$ of the same volume and such that $p_i \to \infty$. The observation about the mean curvature of the sphere that represents the runaway volume follows from a first variation argument. 
\end{proof}
\end{proposition}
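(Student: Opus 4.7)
The plan is to start with a minimizing sequence $(\Omega_k)_{k\in\N}$ of finite-perimeter Borel sets containing the horizon with $\CL^3_g(\Omega_k) = V$ and $\H^2_g(\partial^* \Omega_k) \to A_g(V)$. Lemma \ref{lem: crude isoperimetric inequality} gives a uniform perimeter bound, so by BV-compactness we may pass to a subsequence such that $\chi_{\Omega_k} \to \chi_\Omega$ in $L^1_{loc}(\hat M)$ for some Borel set $\Omega \subset \hat M$ containing the horizon and having locally finite perimeter, with $\CL^3_g(\Omega) \leq V$ and $\H^2_g(\partial^* \Omega) \leq \liminf_{k\to\infty} \H^2_g(\partial^* \Omega_k)$ by lower semicontinuity. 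The first task is to account for the mass defect $V - \CL^3_g(\Omega)$, which can only be lost out to infinity, and to identify it with a Euclidean ball.

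Following the strategy of Ritor\'e--Rosales (cited as \cite[Theorem 2.1]{Ritore-Rosales:2004}), one extracts from the ``runaway part'' of each $\Omega_k$ — i.e. essentially $\Omega_k \setminus B_{\rho_k}$ for a suitable $\rho_k \to \infty$ — a sequence of finite-perimeter sets $\Omega_k'$ that drift to infinity, are disjoint from $\Omega$, and carry the residual volume $V - \CL^3_g(\Omega)$ in the limit without losing perimeter mass. The uniform perimeter bound, combined with the $\C^0$-asymptotics to Euclidean space in the end and the Euclidean isoperimetric inequality applied on the region $\hat M \setminus B_{\rho_k}$ (with a multiplicative error factor tending to $1$ as $\rho_k \to \infty$), shows that
\begin{equation*}
  \liminf_{k\to\infty} \H^2_g(\partial^* \Omega_k') \geq 4\pi r^2, \quad r := \Bigl( \tfrac{3(V - \CL^3_g(\Omega))}{4\pi} \Bigr)^{1/3}.
\end{equation*}
Replacing each $\Omega_k'$ by a coordinate ball $B(p_k, r_k)$ of the same volume with $p_k \to \infty$ can only decrease areas asymptotically, and this replacement together with $\Omega$ still forms an admissible competitor for $A_g(V)$. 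Consequently $\H^2_g(\partial^* \Omega) + 4\pi r^2 = A_g(V)$, and $\Omega$ is itself an isoperimetric region for its own volume (otherwise one could improve the sum). The regularity proposition from Section \ref{sec:regularity} then gives $\Omega$ a smooth bounded representative.

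It remains to prove the mean curvature identity when both $r > 0$ and $\CL^3_g(\Omega) > 0$. The idea is a one-parameter variation that transfers volume between $\partial\Omega$ and the runaway ball while preserving total volume $V$. Choose a smooth vector field $X$ compactly supported in a neighborhood of a regular point of $\partial\Omega$ with $\int_{\partial\Omega} \langle X,\nu\rangle \, d\H^2_g = 1$, and let $\Omega_t$ be the associated flow. Simultaneously enlarge or shrink a Euclidean ball far out in the end (approximately $B(p_k, r_k)$ with $p_k\to\infty$) by an amount that compensates the volume change, so that the combined volume stays equal to $V$. Because $\partial\Omega$ has constant mean curvature $H$ and the Euclidean ball of radius $r$ has mean curvature $2/r$, the first variation of total area under this combined deformation at $t=0$ equals $H - \tfrac{2}{r}$; minimality forces this to vanish for both signs of $t$, hence $H = \tfrac{2}{r}$. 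The one subtlety requiring care is that the ball being shrunk lives in the asymptotically flat end rather than in exact Euclidean space, but since $p_k \to \infty$ the metric error on the variation is $o(1)$, which is absorbed when passing to the limit in $k$. This matching of mean curvatures is the expected Lagrange-multiplier condition and is the main (routine) content of the last assertion.
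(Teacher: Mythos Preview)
Your proposal is correct and follows essentially the same approach as the paper: invoke the Ritor\'e--Rosales concentration--compactness decomposition to split a minimizing sequence into a bounded isoperimetric limit $\Omega$ plus divergent pieces, replace the divergent pieces by coordinate balls via the (asymptotically sharp) Euclidean isoperimetric inequality in the end, and read off the mean-curvature matching from first variation. One small quibble: the uniform perimeter bound on the minimizing sequence is immediate from $\H^2_g(\partial^*\Omega_k)\to A_g(V)$ and does not require Lemma~\ref{lem: crude isoperimetric inequality}.
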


Proposition \ref{prop:cut_and_paste_for_minimizing_sequence} leaves the possibility that part of the volume of a minimizing sequence for the isoperimetric problem slides to infinity. If this happens, the leftover isoperimetric limit is not a solution of the original problem. In Euclidean space, the situation is well-understood: for example, in \cite{Duzaar-Steffen:1992}, it is shown that to every closed curve in $\R^3$ and volume $V$ there exists a  mass-minimizing integer multiplicity current that bounds the curve while enclosing  oriented volume $V$ relative to a fixed filling of the curve. A key ingredient in the proof is the exact isoperimetric inequality for $\R^3$. It is used to argue that runaway volume can be clipped off and kept at fixed finite distance as a ball of the same volume, not increasing the area. A delicate cut and paste argument is developed in \cite[Sections 2.7 and 2.9]{Bray:1998} to show existence of isoperimetric regions on compact perturbations of Schwarzschild. In the proof, H. Bray uses an additional assumption (``Condition 1") in a subtle way to ensure that his isoperimetric Hawking mass is a monotone function of the volume. 

For later use, we state the following simple lemma. It follows readily from explicit comparison either with small geodesic balls, or with large coordinate balls:

\begin{lemma} \label{lem:quadraticareagrowthisoperimetric}
Let $(M, g)$ be an initial data set. There exists a constant $\Theta >
0$ so that for every isoperimetric region $\Omega$ containing the horizon one has that $\H^2_g(B_r \cap \partial \Omega) \leq \Theta r^2$ for all $r \geq 1$, and that $\H^2_g (\partial \Omega)^{\frac{1}{2}} \CL^3_g(\Omega)^{- \frac{1}{3}} \leq \Theta$ provided $\CL^3_g(\Omega) \geq 1$. 
\end{lemma}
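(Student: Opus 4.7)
The plan is to establish both estimates by explicit comparison of $\Omega$ with centered coordinate balls $B_R$.

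For the isoperimetric-profile bound, given $V = \CL^3_g(\Omega) \geq 1$ I would choose $R$ so that $\CL^3_g(B_R) = V$. By the asymptotic expansion of $g$ we have $R \leq C V^{1/3}$ and $\H^2_g(\partial B_R) \leq C R^2 \leq C^\prime V^{2/3}$, and for $V$ large $B_R$ contains the horizon and is thus an admissible isoperimetric competitor. This gives $\H^2_g(\partial \Omega) = A_g(V) \leq C^\prime V^{2/3}$ in the large-$V$ regime, and the remaining bounded range of $V$ is dispatched by continuity of $A_g$ together with a simple explicit competitor near $V = 1$ (a slight enlargement of the horizon). Taking the square root and dividing by $\CL^3_g(\Omega)^{1/3}$ proves the second estimate.

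For the quadratic area growth bound, I would split on whether $V \leq r^3$ or not. If $V \leq r^3$, the bound just proved (extended to all $V$) gives $\H^2_g(\partial \Omega \cap B_r) \leq \H^2_g(\partial \Omega) \leq C(V^{2/3} + 1) \leq C^\prime r^2$. If $V > r^3$, I would use the competitor $\Omega^\prime = (\Omega \setminus B_r) \cup B_{r^\prime}$, where $r^\prime \leq r$ is determined by $\CL^3_g(B_{r^\prime}) = \CL^3_g(\Omega \cap B_r)$, so that $\CL^3_g(\Omega^\prime) = V$ and the two pieces are disjoint. The reduced boundary of $\Omega^\prime$ is contained in $(\partial^* \Omega \setminus B_r) \cup \partial B_{r^\prime} \cup (\partial B_r \cap \overline{\Omega})$, so
\begin{equation*}
\H^2_g(\partial^* \Omega^\prime) \leq \H^2_g(\partial^* \Omega) - \H^2_g(\partial^* \Omega \cap B_r) + \H^2_g(\partial B_{r^\prime}) + \H^2_g(\partial B_r).
\end{equation*}
Since $\H^2_g(\partial B_{r^\prime})$ and $\H^2_g(\partial B_r)$ are each bounded by $C r^2$ and $\Omega$ is isoperimetric at volume $V$, rearranging forces $\H^2_g(\partial^* \Omega \cap B_r) \leq C r^2$.

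The one subtlety, and the only genuine obstacle, is that $\Omega^\prime$ must contain the horizon, i.e.\ we need $r^\prime \geq 1$. This is automatic whenever $\CL^3_g(\Omega \cap B_r) \geq \CL^3_g(B_1)$; in the remaining exceptional regime, where $\Omega$ has only a sliver of volume inside $B_r$, I would instead use $B_1$ in place of $B_{r^\prime}$ and restore the small excess volume by an inward normal deformation of $\partial \Omega$ far outside $B_r$ (which has large volume in this case so such a deformation is available). The associated area error is a bounded constant depending only on $(M, g)$ and is absorbed into $\Theta$ for $r \geq 1$. Once this horizon bookkeeping is in place, the remaining argument is an elementary cut-and-paste.
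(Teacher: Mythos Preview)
Your approach is essentially the same as the paper's: the paper gives a one--sentence proof (``explicit comparison either with small geodesic balls, or with large coordinate balls''), and you have reconstructed exactly this --- large coordinate balls for the profile bound, and a cut--and--paste with $B_{r'}$ for the quadratic area growth. The second estimate and Case~A of the first estimate are fine.

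There is one small point worth tightening. In the exceptional regime $\CL^3_g(\Omega\cap B_r)<\CL^3_g(B_1)$ your fix via an inward normal deformation of $\partial\Omega$ is in the right spirit, but as written it does not obviously yield an area correction bounded by a constant depending only on $(M,g)$: the normal injectivity radius and the second fundamental form of $\partial\Omega$ are not controlled uniformly in $\Omega$ at this stage, so the $O(t)$ area error from pushing inward need not be uniform. A cleaner way --- and probably what the paper's ``small geodesic balls'' alludes to --- is to avoid deforming $\partial\Omega$ altogether. Fix once and for all a family $\{W_v\}_{0\le v\le \CL^3_g(B_1)}$ of regions with $W_v\subset B_1$, each containing the horizon, with $\CL^3_g(W_v)=v$ and $\H^2_g(\partial^* W_v)\le C(M,g)$ (for small $v$ take a thin tubular neighbourhood of $\partial M$; for $v$ near $\CL^3_g(B_1)$ take $B_1$ minus a small geodesic ball; interpolate in between). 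Then the competitor $(\Omega\setminus B_r)\cup W_{\CL^3_g(\Omega\cap B_r)}$ has exactly volume $V$, contains the horizon, and its perimeter is at most $\H^2_g(\partial^*\Omega\setminus B_r)+\H^2_g(\partial B_r)+C$, which gives the bound directly. With this adjustment the argument is complete.
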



\section{Large isoperimetric regions center} \label{sec:center}

\begin{theorem} \label{thm:centering} Let $(M, g)$ be an initial data set that is $\C^0$-asymptotic to Schwarzschild of mass $m > 0$. There exists a large constant $V_0 > 0$ with the following property: Let $\Omega$ be an isoperimetric region containing the horizon such that $\CL^3_g(\Omega) = V \geq V_0$. Let $r \geq 1$ be such that $\CL^3_g(B_r) = V$. Then $\partial \Omega$ is a smooth connected hypersurface close to the centered coordinate sphere $S_r$. The scale invariant $\C^2$-norms of functions that describe such large isoperimetric surfaces as normal graphs above the corresponding centered coordinate spheres tend to zero as the enclosed volume diverges to infinity.  
\end{theorem}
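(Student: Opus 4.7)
The strategy combines the effective volume comparison of Theorem~\ref{thm:effective_volume_comparison} with a rescaling compactness argument. The plan is to proceed in two stages: first, use the elementary competitor estimate $A_g(V)\leq\H^2_g(S_r)$ together with Theorem~\ref{thm:effective_volume_comparison} to rule out that the isoperimetric region is $(\tau,\eta)$-off-center for any fixed $(\tau,\eta)\in(1,\infty)\times(0,1)$; second, rescale the metric by $r^{-2}$, extract a varifold limit of the rescaled boundaries in Euclidean $\R^3$, identify this limit with the round unit sphere via Aleksandrov's theorem, and upgrade the convergence to $C^{2,\alpha}$ using Schauder estimates for the constant mean curvature equation.

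\emph{Stage 1 (centering).} Fix $(\tau,\eta)\in(1,\infty)\times(0,1)$. By Lemma~\ref{lem:quadraticareagrowthisoperimetric}, isoperimetric regions satisfy the quadratic area-growth and isoperimetric-ratio bounds required to invoke Theorem~\ref{thm:effective_volume_comparison} uniformly in $V$. Since $B_r$ is an admissible competitor for $A_g(V)$, one has $\H^2_g(\partial\Omega)=A_g(V)\leq\H^2_g(S_r)$; the $\C^0$-closeness to Schwarzschild together with (\ref{eqn:expansionprofile}) yield $\H^2_g(S_r)=4\pi r^2+O(r)$. If $\Omega$ were $(\tau,\eta)$-off-center, Theorem~\ref{thm:effective_volume_comparison} would force $\H^2_g(\partial\Omega)\geq\H^2_g(S_r)+c(\tau,\eta,m)\,r$, contradicting the previous bound for $V$ sufficiently large. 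Hence for every $(\tau,\eta)$ there is $V_0=V_0(\tau,\eta)$ such that no isoperimetric region of volume $\geq V_0$ is $(\tau,\eta)$-off-center.

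\emph{Stage 2 (smooth convergence).} Let $\Omega_j$ be isoperimetric regions with $V_j:=\CL^3_g(\Omega_j)\to\infty$ and associated centered radii $r_j\to\infty$. Set $\tilde g_j:=r_j^{-2}g$ and $\tilde\Sigma_j:=r_j^{-1}\partial\Omega_j$ in the rescaled coordinates $\tilde x=x/r_j$. The pointwise decay of $g$, $\partial g$, and $\partial^2 g$ in Definition~\ref{def:initial_data_sets} gives $\tilde g_j\to\delta$ in $C^2_{\mathrm{loc}}(\R^3\setminus\{0\})$. The rescaled surfaces $\tilde\Sigma_j$ have uniformly bounded area, bounded density ratios by Lemma~\ref{lem:quadraticareagrowthisoperimetric}, and constant mean curvature $\tilde H_j=r_j A_g'(V_j)$ converging to $2$ (obtained by differentiating $A_g(V)=4\pi r^2+O(r)$). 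Applying Stage~1 along sequences $\tau_k\downarrow 1$ and $\eta_k\downarrow 0$ confines the bulk of $\tilde\Sigma_j$ to any fixed neighborhood of $\{|\tilde x|=1\}$. Varifold compactness together with Allard's regularity theorem extracts a subsequential $C^{1,\alpha}$ limit $\tilde\Sigma_\infty\subset\R^3$ that is a smooth embedded CMC surface of mean curvature $2$ enclosing Euclidean volume $4\pi/3$. Aleksandrov's theorem identifies $\tilde\Sigma_\infty$ with a round sphere of radius $1$, and Stage~1 pins its center to the origin. Schauder estimates for the CMC graph equation upgrade the convergence to $C^{2,\alpha}$, and unscaling produces the asserted scale-invariant $C^2$-proximity of $\partial\Omega_j$ to $S_{r_j}$. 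Connectedness follows because a second component could not persist without contradicting the singleton limit.

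\emph{Main obstacle.} The most delicate technical point is ensuring that the varifold limit is a genuine embedded unit sphere of multiplicity one and that no mass is lost to the origin, where the rescaled horizon collapses; both are handled using the bounded density ratio from Lemma~\ref{lem:quadraticareagrowthisoperimetric} and the monotonicity formula, but some care is needed near the horizon. A secondary issue is verifying the asymptotic $r_j A_g'(V_j)\to 2$, which relies on $B_{r_j}$ being asymptotically optimal for $A_g$ to first order, as encoded in (\ref{eqn:expansionprofile}).
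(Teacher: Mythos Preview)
Your overall two-stage strategy---rescale to a Euclidean limit and rule out off-centering via Theorem~\ref{thm:effective_volume_comparison}---is the same as the paper's. There is one route difference worth noting and one genuine gap.

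\textbf{Identifying the limit.} The paper rescales the \emph{regions} $\Omega_i$ rather than their boundaries, and shows directly that the limiting set of finite perimeter is $B(0,1)$: if any volume persisted outside $B(0,\tau)$ for some $\tau>1$, the relative isoperimetric inequality would produce a definite amount of boundary area there, forcing the original $\Omega_i$ to be off-center in contradiction to Theorem~\ref{thm:effective_volume_comparison}. Once the limit region is $B(0,1)$, smooth convergence of the boundaries follows from a general principle for isoperimetric regions (the paper cites \cite[Proposition~5]{Ros:2005}). This sidesteps Aleksandrov's theorem and any need to know the limiting mean curvature. Your varifold/Allard/Aleksandrov route can be made to work, but your derivation of $\tilde H_j\to 2$ by ``differentiating $A_g(V)=4\pi r^2+O(r)$'' is not valid as written---one cannot differentiate an $O(r)$ remainder, and $A_g$ is not known a~priori to be differentiable. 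This can be repaired by a direct barrier comparison, but the paper's approach avoids the issue altogether. Note also that Stage~1 only confines area to $B(0,\tau)$, not to a neighborhood of the unit sphere; you need the volume argument to finish pinning the limit.

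\textbf{Connectedness.} Here your argument has a real gap. You say a second component ``could not persist without contradicting the singleton limit,'' but a component of $\partial\Omega_j$ with area $o(r_j^2)$ is invisible in the blow-down, so the singleton limit gives no information about it. The paper devotes two further paragraphs to this. First it rules out components \emph{outside} the large approximately-spherical region $\tilde\Omega_i$: by the monotonicity formula any such component would carry a definite positive amount of area in the blow-down, forcing $\limsup \H^2_g(\partial\Omega_i)\,\CL^3_g(\Omega_i)^{-2/3}$ strictly above the Euclidean value and contradicting comparison with coordinate balls. Second it rules out components of $\partial\Omega_i$ \emph{inside} $\tilde\Omega_i$ by a deformation: push the outer sphere-like boundary $\Sigma_i$ inward to recover the volume enclosed between $\Omega_i$ and $\tilde\Omega_i$; the resulting competitor has strictly less boundary area, contradicting the isoperimetric property. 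Neither step follows from the limit identification alone.
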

 
\begin{proof}
Let $\{\Omega_i\}_{i=1}^\infty$ be a sequence of isoperimetric regions containing the horizon and with $\CL^3_g(\Omega_i) \to  \infty$. In view of Lemma \ref{lem:quadraticareagrowthisoperimetric}, fixing parameters $(\tau, \eta) \in (1, \infty) \times (0, 1)$, we can apply Theorem \ref{thm:effective_volume_comparison} to $\Omega_i$ provided $i$ is sufficiently large. 

We consider the parts of the regions $\Omega_i$ that lie in $M \setminus B_1 \cong_x \R^3 \setminus B(0, 1)$. We use homotheties
$h_{\lambda} : x \to \lambda \cdot x$ in the Euclidean chart to scale down
by a factor $\lambda_i = \left( 3 \CL^3_g (\Omega_i \setminus B_1) /(4
  \pi)\right)^{\frac{1}{3}}$ to obtain sets $\hat \Omega_i \subset
\R^3 \setminus B(0, \lambda_i^{-1})$ that are locally isoperimetric
with respect to the metric $g_i:= \lambda_i^{-2} h_{\lambda_i}^*g$ and
such that $\CL^3_{g_i}(\hat \Omega_i) = \frac{4 \pi}{3}$. Note that $(\R^3 \setminus
B(0,\lambda_i^{-1}), g_i) \to (\R^3 \setminus \{0\}, \sum_{j=1}^3
dx_j^2)$ in $\C^2_{loc}$ and that $\CL_{g_i}^3 (B(0, 1)
\setminus B(0, \lambda_i^{-1})) \to \frac{4 \pi}{3}$. Passing to a
subsequence if necessary, we can assume that $\hat \Omega_i$ converges
locally as a set of finite perimeter to $\Omega$ in $\R^3$.

We claim that $\limsup_{i \to \infty}
\CL^3_{g_i}( \hat \Omega_i \setminus B(0, \tau)) = 0$ for every $\tau >1$. Suppose that
not. Passing to a subsequence if necessary, it follows that for some $\tau > 1$ and $\eps >0$ we have that $\CL^3_{g_i}( \hat \Omega_i \setminus B(0, \tau))\geq \eps$ for all $i$. The relative isoperimetric inequality, an appropriate version of which follows from (\ref{eqn:relativeSobolev}) in a standard way, gives that $\H^2_{g_i} (\partial \hat\Omega_i \setminus B(0, \tau)) \gtrsim \CL^3_{g_i}( \hat \Omega_i \setminus B(0, \tau))^{\frac{2}{3}}$. In particular, $\H^2_{g_i} (\partial \hat\Omega_i \setminus B(0, \tau))  \geq 2 \eta \H^2_{g_i} (\partial B (0, 1))$ for some $\eta >0$ and for all $i$. This implies that each $\Omega_i$ is $(\frac{1 + \tau}{2}, \eta)$-off-center. (The reason we are passing from $\tau$ to $\frac{1 + \tau}{2}$ and from $2 \eta$ to $\eta$ is that we have to adjust for the volume by $\CL^3_{g} (\Omega_i \setminus B_1) + \CL^3_g (B_1)$.) See Figure~\ref{fig:limit_off_center}. 
Theorem \ref{thm:effective_volume_comparison} shows that $\Omega_i$ is not isoperimetric, contradicting our assumption. 
Thus $\limsup_{i \to \infty}
\CL^3_{g_i}( \hat \Omega_i \setminus B(0, \tau)) = 0$ for every $\tau > 1$, as desired. It follows that $\Omega = B(0, 1)$. 

For isoperimetric regions, convergence as sets of locally finite perimeter is equivalent to locally
smooth convergence so long as the volume
does not shrink away. (See e.g. \cite[Proposition 5]{Ros:2005}.) It follows that, for $i$ large,  the boundary of $\Omega_i$
contains a component $\Sigma_i$ that is close to the centered coordinate sphere $S_{r_i}$ whose radius $r_i$ is such that $\CL_g^3(B_{r_i}) = \CL^3_g(\Omega_i)$. 

Let $\tilde \Omega_i$ be the bounded component of $M \setminus \Sigma_i$. We claim that $\Omega_i = \tilde \Omega_i$. 

To see that $\Omega_i \subset \tilde \Omega_i$, note first that the components of $\partial \Omega_i$ all have the same constant mean curvature $\sim 2 /r_i$. Assume that, after passing to a subsequence if necessary, every $\Omega_i$ has at least one component that is disjoint from $\tilde \Omega_i$. The preceding analysis shows that such components have to slide off to infinity in the preceding blow down limit. The monotonicity formula shows that the area of these components subconverges to a positive number in the blow down limit. It follows that $\limsup_{i \to \infty }\H^2_g (\Omega_i) \CL^3_g(\Omega_i)^{- 2 / 3}> (4 \pi) (4 \pi / 3)^{-2/3}$. On the other hand, a comparison with large coordinate balls gives that $\limsup_{i \to \infty }\H^2_g (\Omega_i) \CL^3_g(\Omega_i)^{- 2 / 3} \leq (4 \pi) (4 \pi / 3)^{-2/3}$. This contradiction shows that $\Omega_i \subset \tilde \Omega_i$.

Assume that $\Omega_i$ is properly contained in $\tilde \Omega_i$. The blow down argument shows that $\Omega_i \cap B_{\mu_i} = \tilde \Omega_i \cap B_{\mu_i}$ where $\mu_i \geq 1$ are such that $\mu_i / r_i \to 0$ as $i \to \infty$. Consider the region obtained from $\tilde \Omega_i$ by pushing its outer boundary $\Sigma_i$, which is close to a large coordinate sphere, inward until the resulting region has volume $\CL_g^3 (\Omega_i)$. The boundary area of this new region is strictly less than that of $\Omega_i$. This contradicts the assumption that $\Omega_i$ is an isoperimetric region. Thus $\Omega_i = \tilde \Omega_i$. 
\begin{figure}
  \centering
  \resizebox{0.4\linewidth}{!}{\input{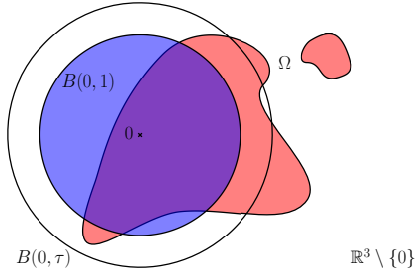}}
  \caption{If the blow down limit $\Omega$ has volume outside of $B(0, \tau)$, then the original sets $\Omega_i$ are $(\frac{1 + \tau}{2}, \eta)$-off-center for some $\eta >0$.}
  \label{fig:limit_off_center}
\end{figure}
\end{proof}

\begin{theorem} \label{thm:minimizers_exist} Let $(M, g) $ be an initial data set that is $\C^0$-asymptotic to Schwarzschild of mass $m>0$. There exists $V_0 >0$ so that for every volume $V \geq V_0$ there is an isoperimetric region $\Omega$ containing the horizon  with $\CL^3_g(\Omega) = V$. 
\begin{proof}
Let $V_i \to \infty$ be a divergent sequence of volumes. Let $r_i \geq 0$ be radii and $\Omega_i$ be isoperimetric regions containing the horizon as in Proposition \ref{prop:cut_and_paste_for_minimizing_sequence}. Using (\ref{eqn:expansionprofile}), we see that $\CL^3_g(\Omega_i) \to \infty$. From Theorem \ref{thm:centering} we know that for $i$ large, $\Omega_i$ is close to a large centered coordinate ball in $(M, g)$. If $r_i > 0$, then the mean curvature of $\partial \Omega_i$ and hence the radius of the coordinate sphere that it is close to correspond to that of $B(0, r_i) \subset \R^3$, by Proposition \ref{prop:cut_and_paste_for_minimizing_sequence}. A configuration of two large disjoint coordinate balls in $(M, g)$ of essentially the same radius is not isoperimetric, and far from it. Hence $r_i = 0$ for $i$ sufficiently large, and the theorem follows. 
\end{proof}
\end{theorem}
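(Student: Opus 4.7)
The plan is to combine Proposition \ref{prop:cut_and_paste_for_minimizing_sequence}, which guarantees that any infimum for $A_g(V)$ is attained by a genuine isoperimetric region plus a possibly-nonempty Euclidean ball at infinity carrying the ``runaway volume'', with the centering result Theorem \ref{thm:centering} to rule out the possibility of any runaway volume once $V$ is sufficiently large. Concretely, I would argue by contradiction: if the statement failed there would be a sequence $V_i \to \infty$ for which every almost-minimizing configuration splits into an isoperimetric region $\Omega_i$ (containing the horizon) and a Euclidean ball of radius $r_i > 0$ sliding to infinity, with $\CL^3_g(\Omega_i) + \tfrac{4\pi r_i^3}{3} = V_i$ and $\H^2_g(\partial \Omega_i) + 4\pi r_i^2 = A_g(V_i)$.

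The next step is to check that $\CL^3_g(\Omega_i) \to \infty$ along such a sequence. This follows from the expansion (\ref{eqn:expansionprofile}) for $A_m$, which forces $A_g(V_i)$ to grow like $(36\pi)^{1/3} V_i^{2/3}$, so one cannot have all of the volume escaping as a single Euclidean ball (whose boundary area would be smaller than the area of the centered coordinate sphere of the same volume in $(M,g)$ — this is exactly where positivity of $m$ enters, via the sub-Euclidean area growth in Schwarzschild). Having established $\CL^3_g(\Omega_i) \to \infty$, Theorem \ref{thm:centering} applies: the boundary $\partial \Omega_i$ is smooth, connected, and $\C^2$-close (in a scale-invariant sense) to a centered coordinate sphere $S_{\rho_i}$ enclosing $g$-volume $\CL^3_g(\Omega_i)$ with the horizon.

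The heart of the argument is then the comparison that rules out $r_i > 0$ for large $i$. By the first-variation statement in Proposition \ref{prop:cut_and_paste_for_minimizing_sequence}, $\partial\Omega_i$ has constant mean curvature $2/r_i$; but we also know from Theorem \ref{thm:centering} that $\partial\Omega_i$ is asymptotically a centered coordinate sphere, whose mean curvature in $(M,g)$ is asymptotic to $2/\rho_i$. Thus $r_i$ and $\rho_i$ are comparable and both diverge, which means the candidate minimizer is essentially two well-separated round balls of comparable large radius. Comparing with a single centered coordinate ball of volume $V_i$, whose boundary area is $(1 + o(1)) (36\pi)^{1/3} V_i^{2/3}$, against the two-ball configuration whose area is strictly larger than $(4\pi)(\tfrac{3}{4\pi})^{2/3}(r_i^2 + \rho_i^2) \geq 2^{1/3}(36\pi)^{1/3} V_i^{2/3}$ (up to lower order), yields a strict inefficiency of order $V_i^{2/3}$, contradicting the asserted minimality.

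I expect the main obstacle to be making the last comparison rigorous under only $\C^0$-asymptotics: one needs that the area of a large centered coordinate sphere enclosing volume $V$ in $(M,g)$ is truly close to $A_m(V)$, not just bounded above by it, and that the runaway ball's area is controlled from below by the Euclidean value. Both facts are already packaged into the earlier results (via Lemma \ref{lem:volume_comparison}, Corollary \ref{cor:surface_comparison} used in the proof of Theorem \ref{thm:effective_volume_comparison}, and the expansion (\ref{eqn:expansionprofile})), so once the diverging-volume and centering dichotomy is invoked, the contradiction reduces to the elementary subadditivity statement $r^2 + \rho^2 < (r^3 + \rho^3)^{2/3}$ when $r, \rho > 0$, applied to leading order.
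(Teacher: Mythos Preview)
Your proposal is correct and follows essentially the same route as the paper: invoke Proposition~\ref{prop:cut_and_paste_for_minimizing_sequence} to split $V_i$ into an isoperimetric piece $\Omega_i$ plus a runaway Euclidean ball of radius $r_i$, use (\ref{eqn:expansionprofile}) to force $\CL^3_g(\Omega_i)\to\infty$, apply Theorem~\ref{thm:centering} to identify $\partial\Omega_i$ with a centered sphere, match mean curvatures to get $\rho_i\sim r_i$, and then observe that two comparable balls lose to one. Two sign slips to fix: in your step showing $\CL^3_g(\Omega_i)\to\infty$ the Euclidean ball's area is \emph{larger} (not smaller) than that of the centered Schwarzschild sphere of the same volume when $m>0$, and the final ``subadditivity'' inequality should read $r^2+\rho^2 > (r^3+\rho^3)^{2/3}$.
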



\section {Isoperimetric regions in initial data sets with general asymptotics} \label{sec:general_asymptotics}

Let $(M, g)$ be an initial data set with non-negative scalar curvature. Let $\{\Omega_i \}_{i=1}^\infty$ be a sequence of isoperimetric regions containing the horizon such that $\CL^3_g(\Omega_i) \to \infty$. The argument in Proposition \cite[Proposition 5]{Ros:2005} shows that the $\Omega_i$ subconverge to a locally isoperimetric region $\Omega$. In Theorem \ref{thm:PMT} below, we show that the unbounded components of $\partial \Omega$ are totally geodesic and that the scalar curvature of $M$ vanishes on them. 

If we assume that the scalar curvature of $M$ is everywhere positive, this result puts a strong limitation on the possible behavior of large isoperimetric regions; cf. Corollary \ref{cor:PMT}.  

Theorem \ref{thm:PMT}  is the precursor of our more subtle result in \cite{stablePMT}, which applies to regions whose boundaries are only assumed to be volume-preserving stable constant mean curvature surfaces. The proofs of both results are based on ideas of R. Schoen and S.-T. Yau in \cite{Schoen-Yau:1979-pmt1}. On a technical level, the proof of Theorem \ref{thm:PMT} is quite different from that in \cite{stablePMT}, so we include it.

\begin {theorem} [Cf. Theorem 1.5 in \cite{stablePMT}]  \label{thm:PMT}
Assumptions as in the first paragraph. Then $\partial \Omega$ has at most one unbounded component, and this component is a totally geodesic area-minimizing hypersurface. The scalar curvature of $M$ vanishes on any unbounded component of $\partial \Omega$.  
\begin{proof} By \cite[Corollary 5.6]{stablePMT}, the mean curvature of $\partial \Omega_i$ tends to zero as $i \to \infty$. It follows that $\partial \Omega$ is a minimal surface. Let $\Sigma$ be an unbounded component of $\partial \Omega$. We employ an argument of R. Schoen and S.-T. Yau from \cite{Schoen-Yau:1979-pmt1} to show that $\Sigma$ is totally geodesic. We follow the main steps of \cite{Schoen-Yau:1979-pmt1} very closely and highlight our minor adaptations   to the present context. The important difference with \cite{Schoen-Yau:1979-pmt1} is that we don't a priori know that $\Sigma$ is strongly stable. That this is nevertheless the case follows from the result presented in Appendix \ref{sec:minimizing_limit}.
\begin{enumerate} [(a)]
\item By Appendix \ref{sec:minimizing_limit}, $\Sigma$ is an area-minimizing boundary. Hence $\Sigma$ is stable with respect to compactly supported variations: 
\begin{equation} \label{eqn: stability on sigma pmt}
\int _{\Sigma} \left( |h|^2 + \Ric_g(\nu, \nu) \right) \phi^2 d \H^2_g \leq \int_{\Sigma} |\nabla_{\Sigma} \phi|^2  d \H^2_g  \text{ for all } \phi \in \C_c^1(M).
\end{equation}

\item This step and the next one differ slightly from \cite{Schoen-Yau:1979-pmt1}.  The homothetic rescalings $\lambda^{-1} \left( \Sigma \setminus B_1 \right) \subset \R^3 \setminus B(0, \lambda^{-1})$ subconverge as $\lambda \to \infty$ to area-minimizing boundaries in $\R^3 \setminus \{0\}$ (with the Euclidean metric). Such boundaries are hyperplanes. It follows from (iterations of) this argument that $\Sigma$ intersects any sufficiently large coordinate sphere $S_r \subset M$ in a circle. It follows that $\Sigma$ is planar outside a compact subset of $(M, g)$. In particular, $\Sigma$ is of finite topological type.    

\item $\Sigma$ is a mass-minimizing integral current in $(M, g)$. The argument is indirect. Consider a large coordinate ball $B_r$ with mean-convex boundary $S_r$. From the preceding step we know that $S_r$ intersects $\Sigma$ transversely in a smooth connected curve. By the maximum principle argument of \cite{Solomon-White:1989}, the mass-minimizing current in $(M, g)$ spanning $\Sigma \cap S_r$ lies inside of $B_r$ and is disjoint from the horizon. By \cite{Hardt-Simon:1979}, this mass-minimizing integral current is a smooth, embedded, multiplicity one hypersurface. Its area must be $\H^2_g (\Sigma \cap B_r)$. It follows that $\Sigma$ is mass-minimizing with respect to current deformations, and not just amongst boundaries. We are grateful to Leo Rosales and to Brian White for helping us with this point.

\item $\Sigma$ has quadratic area growth: there exists a constant $\Theta>0$ depending only on $(M, g)$ such that $\H^2_g(\Sigma \cap B_r) \leq \Theta r^2$ for all $r \geq 1$ sufficiently large. This follows from the mass-minimizing property of $\Sigma$ and comparison with large coordinate spheres. 

\item We have that $\int_{\Sigma} |\Ric_g| d \H^2_g < \infty$. This follows from Lemma \ref{lem:area_growth_decay} and because $|\Ric_g| = O(r^{-3})$. 

\item Because $\Sigma$ has quadratic area growth, we can use the `logarithmic cut-off trick' in (\ref{eqn: stability on sigma pmt}) to obtain that $\int_{\Sigma} \left( |h|^2 + \Ric_g(\nu, \nu) \right) d \H^2_g < \infty$. It follows from the Gauss equation that $\int_\Sigma |\kappa| d \H^2_g < \infty$, where $\kappa$ is the Gauss curvature of $\Sigma$. 

\item From the Gauss equation and the Cohn-Vossen inequality \cite {Cohn-Vossen:1935}, one sees that  \begin{equation} \label{eqn:stabilityPMT} 0 \leq \int_{\Sigma} \left(\Scal_g + |h|^2\right) d \H^2_g \leq 2 \int_{\Sigma} \kappa d \H^2_g \leq 4 \pi \chi(\Sigma).\end{equation} (The Cohn-Vossen inequality applies because $\Sigma$ is complete, has absolutely integrable Gauss curvature, and is of finite topological type. See also \cite[p. 86]{Osserman:1986} and  \cite[p. 1]{Finn:1965}). The theorem will follow if we can show that $\int_\Sigma \kappa d \H^2_g = 0$. We will assume for a contradiction that  $\int_\Sigma \kappa d \H^2_g >0$. Note that in this case, (\ref{eqn:stabilityPMT}) implies that  $\Sigma$ is homeomorphic to the plane $\mathbb{C}$. 

\item Since $\int_\Sigma |\kappa| d \H^2_g < \infty$ and $\Sigma \cong \mathbb{C}$, a theorem of A. Huber's \cite{Huber:1957} gives that there exists a conformal diffeomorphism $F : \mathbb{C} \to \Sigma$. (We refer the reader to  \cite{Li-Tam:1991} for a comprehensive discussion of the topological type and the conformal structure of  complete surfaces the negative part of whose Gaussian curvature is integrable.) By results of R. Finn's \cite{Finn:1965} and A. Huber's \cite{Huber:1966}, one has that $4 \int_{\Sigma} \kappa d \H^2 = 4 \pi - \lim_{k \to \infty} A_k^{-1}L_k^2$ (the existence of the limit is part of the conclusion) where $L_k = \H^1_g (F(\{z \in \mathbb{C} : |z| =k \}))$ and $A_k := \H^2_g (F(\{z \in \mathbb{C} : |z| \leq k\}))$. The goal is to show that $\int_{\Sigma} \kappa d \H^2_g  = 0$. Since we already know that $0 \leq \int_{\Sigma} \kappa d \H^2_g$, this boils down to showing that $4\pi \leq \lim_{k \to \infty} A_k^{-1} L_k^2$.

\item Since $F$ is proper, $F(\{z \in \mathbb{C} : |z| =k \})$ will lie outside every given compact subset of $\Sigma$ (and hence of $M$) provided $i$ is sufficiently large. Hence we can view $F(\{z \in \mathbb{C} : |z| =k \})$ as a curve in Euclidean space $(\R^3, \delta_{ij})$ by using the coordinate system in the asymptotically flat end of $(M, g)$. Let $\tilde \Sigma_k$ be the least (Euclidean) area integer multiplicity current spanning $F(\{z \in \mathbb{C} : |z| =k \}) \subset \R^3$. There are two cases: either $\tilde \Sigma_k$ leaves every compact subset of $\R^3$ as $k \to \infty$, in which case $M_g(\tilde \Sigma_k) = (1 + o(1)) M_\delta (\tilde \Sigma_k)$ (where $M_{g}$ and $M_\delta$ denote the current mass with respect to $g$ and $\delta$ respectively), or there exists a radius $r_1 \geq 1$ such that $\tilde \Sigma_{k'} \cap B_{r_1} \neq \emptyset$  for a subsequence $\tilde \Sigma_{k'}$. Either way, the argument in \cite[p. 56-57]{Schoen-Yau:1979-pmt1} can be followed verbatim  to conclude that $4 \pi \leq \lim_ {k \to \infty} A_k^{-1} L_k^2$ and hence that $\int_{\Sigma} \kappa d \H^2_g= 0$. This contradicts our assumption and finishes the proof that $\Sigma$ is totally geodesic (and that $\partial \Omega$ cannot have unbounded components if $\Scal_g > 0$). 
\end{enumerate} 
It follows from the isoperimetric property and Lemma \ref{lem:parallelplanes} below that $\partial \Omega$ can only have one unbounded totally geodesic component.
\end{proof}
\end{theorem}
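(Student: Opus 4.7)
My plan is to follow the strategy of Schoen and Yau from their proof of the positive mass theorem in \cite{Schoen-Yau:1979-pmt1}, preceded by an additional step identifying every unbounded component of $\partial\Omega$ as area-minimizing rather than merely volume-preserving stable. First I would observe that since $\Omega_i$ is isoperimetric of volume $V_i \to \infty$, the constant mean curvature of $\partial\Omega_i$ equals a one-sided derivative of the isoperimetric profile $A_g$ at $V_i$; comparing with large centered coordinate balls gives $A_g(V) \lesssim (36\pi)^{1/3} V^{2/3}$, so these mean curvatures tend to zero and the limit $\partial\Omega$ is a smooth minimal surface. For a fixed unbounded component $\Sigma$, I would then argue that $\Sigma$ is area-minimizing with respect to compactly supported deformations: intuitively, any small area-increasing compactly supported modification can be compensated by slight reshaping at infinity without area cost, so the volume constraint evaporates in the limit. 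This is the content of Appendix~\ref{sec:minimizing_limit}. In particular, $\Sigma$ is strongly stable.

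Next I would combine stability with non-negative scalar curvature and the Gauss equation. A homothetic blow-down argument (area-minimizing tangent cones at infinity in Euclidean $\R^3$ are flat hyperplanes) shows that $\Sigma$ has planar ends, hence finite topological type, quadratic area growth, and asymptotic area density at most $4\pi$. The logarithmic cut-off trick in the stability inequality, together with the decay $|\Ric_g| = O(r^{-3})$, yields that $|h|^2 + \Ric_g(\nu,\nu)$ is integrable on $\Sigma$, whence via the Gauss equation the Gauss curvature $\kappa$ of $\Sigma$ is absolutely integrable. Gauss--Bonnet and the Cohn--Vossen inequality \cite{Cohn-Vossen:1935} then give
\begin{equation*}
0 \;\leq\; \int_\Sigma (\Scal_g + |h|^2)\, d\H^2_g \;\leq\; 2\int_\Sigma \kappa\, d\H^2_g \;\leq\; 4\pi\, \chi(\Sigma),
\end{equation*}
so the first two assertions of the theorem (along a given unbounded component) reduce to showing $\int_\Sigma \kappa\, d\H^2_g = 0$, which then forces $\Scal_g \equiv 0$ and $h \equiv 0$ along $\Sigma$.

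The main obstacle is this last equality. I would argue by contradiction: if $\int_\Sigma \kappa > 0$, then $\chi(\Sigma)=1$ and $\Sigma$ is conformally $\mathbb{C}$ by Huber's theorem \cite{Huber:1957}, parametrized by $F : \mathbb{C} \to \Sigma$. Applying the Finn--Huber isoperimetric identity \cite{Finn:1965,Huber:1966}
\begin{equation*}
4\int_\Sigma \kappa\, d\H^2_g \;=\; 4\pi - \lim_{k\to\infty} \frac{L_k^{\,2}}{A_k},
\end{equation*}
where $L_k, A_k$ are the $g$-length and $g$-area of $F(\{|z|=k\})$ and $F(\{|z|\le k\})$, the task reduces to $\lim_k A_k^{-1} L_k^{\,2} \geq 4\pi$, i.e.\ to an asymptotic Euclidean isoperimetric inequality for $\Sigma$. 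Using that $\Sigma$ is mass-minimizing, I would bound $A_k$ by the $g$-area of any integer multiplicity current in $(M,g)$ spanning the curve $F(\{|z|=k\})\subset M$, and compare that in turn with the Euclidean area of a least-$\delta$-area spanning current: either such a competitor escapes to infinity, in which case $g$-mass and $\delta$-mass agree up to a factor $1+o(1)$ by asymptotic flatness, or it stays bounded along a subsequence and one compares directly inside a fixed coordinate ball. Both cases yield $A_k \le (1+o(1))\, L_k^{\,2}/(4\pi)$, contradicting our assumption. Finally, for the uniqueness of the unbounded component, two such totally geodesic unbounded components would, by the planar-ends analysis above, be asymptotic to parallel Euclidean planes; a region containing the horizon and trapped between two such near-planes cannot be isoperimetric on large scales, since flattening the slab-like portion decreases area at fixed volume. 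This is the content of the anticipated Lemma~\ref{lem:parallelplanes}.
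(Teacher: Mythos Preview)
Your proposal is correct and follows essentially the same route as the paper's proof: Appendix~\ref{sec:minimizing_limit} for area-minimizing hence strong stability, blow-down to planar ends and quadratic area growth, the log cut-off trick and Gauss equation for integrability of $\kappa$, Cohn--Vossen, and then the Schoen--Yau/Finn--Huber contradiction via least-area spanning currents, with Lemma~\ref{lem:parallelplanes} plus the isoperimetric property for uniqueness of the unbounded component. The one step you use without justifying is the upgrade from ``$\Sigma$ is area-minimizing among boundaries'' to ``$\Sigma$ is mass-minimizing among integral currents,'' which you need when you bound $A_k$ by the $g$-area of an arbitrary integer-multiplicity current spanning $F(\{|z|=k\})$; the paper treats this as a separate nontrivial step (its step~(c)), using that $\Sigma\cap S_r$ is a single transverse circle together with the Solomon--White maximum principle and Hardt--Simon regularity to show the mass-minimizing current with that boundary is an embedded multiplicity-one surface and hence a legitimate boundary competitor.
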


\begin{corollary} \label{cor:PMT} Let $(M, g)$ be an initial data set whose scalar curvature is everywhere positive. If the horizon $\partial M$ is empty, we also assume that there are no closed minimal surfaces in $M$. Let $\Omega_i \subset M$ be a sequence of isoperimetric regions enclosing the horizon whose volumes tend to infinity. Then $\limsup_{i \to \infty} \Omega_i := \bigcap_{j=1}^\infty \bigcup_{i=j}^\infty \Omega_i$ equals either $\partial M$ or $M$. 
\end{corollary}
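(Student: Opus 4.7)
The plan is to pass to a subsequential limit of the sets $\Omega_i$, invoke Theorem \ref{thm:PMT} together with the hypothesis $\Scal_g > 0$ and the absence of interior closed minimal surfaces to show that any such limit equals either $\partial M$ or $M$, and then promote this to the claimed statement about the set-theoretic $\limsup$.

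First I would extract, by standard compactness for sets of locally finite perimeter combined with the uniform quadratic area growth bound from Lemma \ref{lem:quadraticareagrowthisoperimetric}, a subsequence converging in $L^1_{loc}(\hat M)$ to a locally isoperimetric region $\Omega$ containing the horizon, with locally smooth convergence of boundaries away from $\partial M$ (cf. \cite[Proposition 5]{Ros:2005}). Exactly as in the opening of the proof of Theorem \ref{thm:PMT}, the mean curvatures of $\partial \Omega_i$ tend to zero, so $\partial \Omega$ is a smooth minimal hypersurface away from $\partial M$. By Theorem \ref{thm:PMT} an unbounded component of $\partial \Omega$ would be totally geodesic with $\Scal_g \equiv 0$ along it, which is ruled out by $\Scal_g > 0$. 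Hence every component of $\partial \Omega$ in $M \setminus \partial M$ is a closed minimal surface, and the standing hypothesis---either the outermost property of $\partial M$ from Definition \ref{def:initial_data_sets} when $\partial M \neq \emptyset$, or the explicit assumption that no closed minimal surfaces exist when $\partial M = \emptyset$---excludes such surfaces. Thus $\partial^* \Omega \cap (M \setminus \partial M) = \emptyset$, and since the interior of $M$ is connected, the region $\Omega$ coincides up to a null set with either $\partial M$ or $M$.

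The final step would be to translate this dichotomy into a statement about $\limsup_i \Omega_i$. If some subsequence has $L^1_{loc}$-limit equal to $M$, then locally smooth convergence of boundaries implies that every compact $K \subset M \setminus \partial M$ is disjoint from $\partial \Omega_i$ for large indices $i$ along that subsequence, and since $\CL^3_g(K \cap \Omega_i) \to \CL^3_g(K)$ it must be entirely contained in $\Omega_i$; hence every $p \in M \setminus \partial M$ lies in $\Omega_i$ for infinitely many $i$, giving $\limsup_i \Omega_i = M$. Otherwise every subsequential limit is $\partial M$, the full sequence converges to $\partial M$ in $L^1_{loc}$, and the same locally smooth convergence forces $\Omega_i \cap K = \emptyset$ for all sufficiently large $i$ on any such $K$, whence $\limsup_i \Omega_i = \partial M$. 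The technical heart of the argument is this passage from $L^1_{loc}$-convergence to pointwise information; it hinges on the fact that the $\Omega_i$ are isoperimetric regions rather than arbitrary finite-perimeter sets, so that the locally smooth regularity in the limit is available.
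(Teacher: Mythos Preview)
Your proposal is correct and follows precisely the route the paper intends. The paper states Corollary \ref{cor:PMT} without proof, treating it as a direct consequence of Theorem \ref{thm:PMT}: the opening paragraph of Section \ref{sec:general_asymptotics} already records that $\Omega_i$ subconverge to a locally isoperimetric region $\Omega$, and the parenthetical in the proof of Theorem \ref{thm:PMT} notes that $\partial \Omega$ has no unbounded components when $\Scal_g > 0$; your argument simply makes explicit the remaining steps (ruling out bounded components via the absence of interior closed minimal surfaces, and passing from $L^1_{loc}$-convergence to the set-theoretic $\limsup$ via locally smooth convergence of boundaries).
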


\begin{lemma} [Essentially \protect{\cite[Proposition 3.1]{beig-schoen}}] \label{lem:parallelplanes}
  Let $(M,g)$ be an initial data set that satisfies the decay assumptions (\ref{eqn: decay assumptions integral decay}). There exist a radius $r_0 \geq 1$ and a constant $C \geq 1$ with the following property: If $\Sigma$ is a complete unbounded properly embedded totally geodesic surface in $M$, then
  $\Sigma \setminus B_{r_0}$ consists of finitely many components $\Sigma_1, \ldots, \Sigma_m$. Moreover, there exists a coordinate plane $P = \{(x_1, x_2, x_3) \in \R^3 \setminus B(0, 1): a x_1 + b x_2 + cx_3 = 0\}$ and functions $f_k: P \setminus B_{r_0} \to \R$ such that the graph of $f_k$ above $P \setminus B_{r_0}$ is contained in $\Sigma_k$ and such that
  \begin{equation*}
    ( \log r )^{-1} |f_k|  + r|\partial f_k| + r^2 |\partial^2 f_k| \leq C \text{ for every } k = 1, \ldots, m.
  \end{equation*}  
\end{lemma}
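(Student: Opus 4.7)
The plan is to establish asymptotic planarity of $\Sigma$ via blow-down, then to show the blow-down limit consists of a single coordinate plane $P$ through the origin, and finally to derive the decay estimates on the graph functions $f_k$ from the minimal-surface equation. For $\lambda \to \infty$ consider the rescaled surface $\Sigma_\lambda := \lambda^{-1}\Sigma$ inside $(\R^3 \setminus B(0, \lambda^{-1}), g_\lambda)$ with $g_\lambda := \lambda^{-2} h_\lambda^* g \to \delta$ in $C^2_{\mathrm{loc}}(\R^3 \setminus \{0\})$. Being totally geodesic is scale-invariant, so each $\Sigma_\lambda$ is totally geodesic in $g_\lambda$, and comparison with large centered coordinate spheres provides the quadratic area bound $\H^2_g(\Sigma \cap B_r) \leq Cr^2$. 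Bounded local area together with the vanishing second fundamental form yields smooth subsequential convergence of $\Sigma_\lambda$ to a properly embedded totally geodesic surface in $(\R^3 \setminus \{0\}, \delta)$, necessarily a union of affine planes through the origin. Consequently, for $r_0$ sufficiently large, $\Sigma \setminus B_{r_0}$ splits into finitely many components $\Sigma_1, \ldots, \Sigma_m$, each of which is smoothly close (after rescaling) to an annular piece of such a limit plane.

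The main obstacle is showing that all of these limit planes coincide with a single coordinate plane $P$. If two components $\Sigma_k$ and $\Sigma_{k'}$ had distinct limit planes $P_k \neq P_{k'}$ through the origin, these would intersect along a line $\ell$; since $\Sigma$ is embedded and the sheets cannot cross, a second blow-up of $\Sigma$ centered at points of $\ell$ at an appropriate intermediate scale would produce two parallel planes in the Euclidean limit, contradicting $P_k \neq P_{k'}$ (as parallel planes through the origin must be equal). Hence there exists a single coordinate plane $P$ over which every $\Sigma_k$ lies as a graph outside $B_{r_0}$.

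Writing $(y, z) \in P \times P^\perp$, we parametrize $\Sigma_k$ as $\{(y, f_k(y)) : y \in P \setminus B_{r_0}\}$ with $|f_k|/r$, $|\nabla f_k|$, and $|\partial^2 f_k|$ tending to $0$ at infinity by the smooth blow-down convergence. The condition $H_g = 0$ (implied by total geodesy) reads
\begin{equation*}
\Delta_\delta f_k = E(y, f_k, \nabla f_k, \partial^2 f_k),
\end{equation*}
where $|E| \lesssim |g - \delta|\,|\partial^2 f_k| + |\partial(g - \delta)|(1 + |\nabla f_k|) = O(r^{-2})$ once $\nabla f_k$ and $\partial^2 f_k$ are known to be bounded. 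Interior Schauder estimates on dyadic Euclidean annuli then yield $|\partial^2 f_k(y)| \leq C r(y)^{-2}$; integration along rays using $\nabla f_k \to 0$ gives $|\nabla f_k(y)| \leq C r(y)^{-1}$; and a further integration produces $|f_k(y)| \leq C \log r(y)$, the logarithm arising because $\log r$ is the fundamental solution of $\Delta_\delta$ on $\R^2$. A final bootstrap absorbs the higher-derivative term in $E$ into the leading-order estimate, completing the claimed bound with $C$ depending only on the asymptotic-flatness constants of $(M, g)$.
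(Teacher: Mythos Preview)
The paper does not supply a proof of this lemma; it is stated with attribution to Beig--Schoen and used without further argument, so there is no in-paper proof to compare against. Your outline is in the right spirit, but it has a genuine gap in the decay step and a soft spot in the area-growth claim.

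\emph{Area growth.} You assert quadratic area growth ``by comparison with large centered coordinate spheres''. Such comparison requires a minimizing property, which the lemma does not assume. What you actually have available is $h\equiv 0$, which makes each sheet of $\Sigma$ a nearly affine graph on scale $r$; local area bounds follow from that, but you should say so rather than invoke a comparison that is unavailable here.

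\emph{Uniqueness of the limit plane.} Your ``intermediate-scale blow-up'' heuristic is morally correct but obscure. The clean version is that the rescalings $\Sigma_\lambda$ are embedded with vanishing second fundamental form in metrics $g_\lambda\to\delta$, so any subsequential limit is a union of planes through the origin, and embeddedness together with $|h|\to 0$ forbids transverse sheets. Equivalently --- and this is closer to the Beig--Schoen argument --- since $h\equiv 0$ the unit normal satisfies $\nabla_X\nu=0$ for all tangent $X$, whence $|\partial\nu^i|\leq C r^{-2}$ along $\Sigma$ in the asymptotic chart and $\nu$ has a single limit at infinity.

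\emph{Decay estimates.} Here your argument breaks. You use only the scalar equation $H_g=0$, giving $\Delta_\delta f_k=O(r^{-2})$ on an exterior domain in $\R^2$. Interior Schauder on a dyadic annulus of scale $R$ yields only $R^2|\partial^2 f_k|\leq C\bigl(\sup_{A_R}|f_k|+O(1)\bigr)$; with merely $|f_k|=o(r)$ from the blow-down you recover $|\partial^2 f_k|=o(r^{-1})$, and the bootstrap stalls. Worse, a right-hand side of exact order $r^{-2}$ in two dimensions can force a radial mode growing like $(\log r)^2$, since $\Delta_{\R^2}(\log r)^2=2r^{-2}$; so $|f_k|\leq C\log r$ is simply not a consequence of $\Delta_\delta f_k=O(r^{-2})$ alone. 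The remedy is to use the full system $h_{ij}=0$: writing out the second fundamental form of the graph gives $\partial_i\partial_j f_k=-\Gamma^{3}_{ij}+\text{(lower order)}=O(r^{-2})$ \emph{pointwise}, with no elliptic estimate needed. Two integrations then yield $r|\partial f_k|\leq C$ and $|f_k|\leq C\log r$ directly.
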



\appendix

\section{Integral decay estimates} \label{sec:integral_decay_estimates}

Our computations in this appendix take place on the part of an initial data set $(M, g)$ that is diffeomorphic to $\R^3 \setminus B(0, 1)$ and such that 
\begin{eqnarray} \label{eqn: decay assumptions integral decay} 
r|g_{ij } - \delta_{ij}| \leq C \text{ for all } r:= |x| \geq 1. 
\end{eqnarray} For Corollary \ref{lem:volume_comparison} we require in addition that 
\begin{eqnarray} 
\label{eqn: decay assumptions integral decay Schwarzschild}
r^2 |g_{ij} - \left(1 + \frac{m}{2r} \right)^4 \delta_{ij}| \leq C \text{ for all } r:= |x| \geq 1,
\end{eqnarray}
i.e. that $(M, g)$ is $\C^0$-asymptotic to Schwarzschild of mass $m>0$. 

\begin{lemma} \label{lem:area_growth_decay} Let $(M, g)$ be an initial data set. Let $r_0 \geq 1$. For every closed hypersurface $\Sigma \subset M$ such that $\H^2_g(\Sigma \cap B_r \setminus B_{r_0}) \leq \Theta r^2$ holds for all $r \geq {r_0}$ one has that
\begin{eqnarray*} 
\int_{\Sigma \setminus B_{r_0}} r^{-\gamma} d \H^2_g \leq  \frac{\gamma}{\gamma - 2} \Theta {r_0}^{2 - \gamma}
\end{eqnarray*} for every $\gamma>2$. 
\begin{proof} The proof uses the co-area formula exactly as in \cite[p. 52]{Schoen-Yau:1979-pmt1}. 
\end{proof}
\end{lemma}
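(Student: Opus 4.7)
The plan is to reduce the surface integral to a one-dimensional integral of the distribution function of $r$ on $\Sigma$ and then apply the quadratic area growth hypothesis. Concretely, I would use the layer-cake identity
\begin{equation*}
  \int_X f\,d\mu = \int_0^\infty \mu(\{f > t\})\,dt
\end{equation*}
with $X = \Sigma \setminus B_{r_0}$, $\mu = \H^2_g|_{\Sigma}$, and $f = r^{-\gamma}$. Equivalently, one can write $r^{-\gamma} = \gamma \int_r^\infty s^{-\gamma-1}\,ds$ and swap the order of integration by Fubini; this is the co-area style reformulation used in \cite{Schoen-Yau:1979-pmt1}.

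The level set $\{x \in \Sigma \setminus B_{r_0} : r(x)^{-\gamma} > t\}$ equals $\Sigma \cap B_{t^{-1/\gamma}} \setminus B_{r_0}$. This set is empty whenever $t^{-1/\gamma} \leq r_0$, i.e.\ whenever $t \geq r_0^{-\gamma}$, so only values $t \in (0, r_0^{-\gamma}]$ contribute. On this range, the hypothesis $\H^2_g(\Sigma \cap B_r \setminus B_{r_0}) \leq \Theta r^2$ with $r = t^{-1/\gamma}$ yields the bound
\begin{equation*}
  \H^2_g\bigl(\Sigma \cap B_{t^{-1/\gamma}} \setminus B_{r_0}\bigr) \leq \Theta\, t^{-2/\gamma}.
\end{equation*}
Inserting this into the layer-cake identity gives
\begin{equation*}
  \int_{\Sigma \setminus B_{r_0}} r^{-\gamma}\,d\H^2_g \leq \Theta \int_0^{r_0^{-\gamma}} t^{-2/\gamma}\,dt.
\end{equation*}

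The assumption $\gamma > 2$ is precisely what makes $-2/\gamma > -1$, so the integral converges at $t=0$. Evaluating the elementary integral yields
\begin{equation*}
  \int_0^{r_0^{-\gamma}} t^{-2/\gamma}\,dt = \frac{\gamma}{\gamma - 2}\,\bigl(r_0^{-\gamma}\bigr)^{1 - 2/\gamma} = \frac{\gamma}{\gamma - 2}\,r_0^{2-\gamma},
\end{equation*}
which is exactly the claimed constant. There is no real obstacle in this proof; the only point requiring attention is matching the threshold $\gamma > 2$ with the integrability of the distribution function tail, which is automatic from the computation above.
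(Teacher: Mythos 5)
Your proof is correct and is essentially the same argument the paper invokes: the paper's proof is a one-line citation to the co-area/Fubini computation in Schoen--Yau, which is exactly your layer-cake identity with the distribution function of $r$ bounded by the quadratic area growth hypothesis. The constant $\frac{\gamma}{\gamma-2}\Theta r_0^{2-\gamma}$ comes out identically in both.
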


\begin{corollary}  \label{cor:surface_comparison} Let $(M, g)$ be an initial data set. Let $r_0 \geq 1$. For every closed hypersurface $\Sigma \subset M$ such that $\H^2_g(\Sigma \cap B_r \setminus B_{r_0}) \leq \Theta r^2$ holds for all $r \geq {r_0}$ one has that 
\begin{eqnarray*}
\int_{\Sigma \setminus B_{r_0}} r^{-2} d \H^2_g \leq  r_0^{- \beta} \H^2_g(\Sigma \setminus B_{r_0})^{\frac{\beta}{2}} \left(\frac{2 \Theta}{\beta}\right)^{\frac{2 - \beta}{2}}
\end{eqnarray*} for every $\beta \in (0, 2)$. 
\end{corollary}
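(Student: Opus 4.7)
The plan is to deduce the corollary from Lemma \ref{lem:area_growth_decay} by a single application of H\"older's inequality, where the role of $\beta$ is to interpolate between the borderline (non-integrable) case $\gamma = 2$ and the range $\gamma > 2$ covered by the lemma.

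First I would rewrite the integrand as $r^{-2} = r^{-2} \cdot 1$ and apply H\"older's inequality on $\Sigma \setminus B_{r_0}$ with conjugate exponents $p = 2/(2-\beta)$ and $q = 2/\beta$. This yields
\begin{equation*}
  \int_{\Sigma \setminus B_{r_0}} r^{-2} \, d\H^2_g
  \leq
  \left( \int_{\Sigma \setminus B_{r_0}} r^{-\gamma} \, d\H^2_g \right)^{(2-\beta)/2}
  \H^2_g(\Sigma \setminus B_{r_0})^{\beta/2},
\end{equation*}
where $\gamma := 2p = 4/(2-\beta)$. Since $\beta \in (0, 2)$, we have $\gamma > 2$, so Lemma \ref{lem:area_growth_decay} applies and gives
\begin{equation*}
  \int_{\Sigma \setminus B_{r_0}} r^{-\gamma} \, d\H^2_g
  \leq
  \frac{\gamma}{\gamma - 2}\, \Theta\, r_0^{2 - \gamma}.
\end{equation*}

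Next I would simplify the constants. A direct computation gives $\gamma/(\gamma - 2) = 2/\beta$ and $(2-\gamma)(2-\beta)/2 = -\beta$. Substituting these into the H\"older estimate produces
\begin{equation*}
  \int_{\Sigma \setminus B_{r_0}} r^{-2} \, d\H^2_g
  \leq
  \left( \frac{2\Theta}{\beta} \right)^{(2-\beta)/2}
  r_0^{-\beta}\, \H^2_g(\Sigma \setminus B_{r_0})^{\beta/2},
\end{equation*}
which is exactly the desired inequality.

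There is essentially no obstacle beyond choosing the H\"older exponents so that the $\H^2_g$-factor carries precisely the power $\beta/2$; the arithmetic on the exponents of $r_0$ then falls out automatically, and the constant $\gamma/(\gamma-2)$ provided by Lemma \ref{lem:area_growth_decay} collapses to $2/\beta$ as required.
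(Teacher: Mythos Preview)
Your argument is correct and is exactly the natural deduction the paper has in mind: the corollary is stated without proof, immediately following Lemma~\ref{lem:area_growth_decay}, and your H\"older interpolation with exponents $p=2/(2-\beta)$, $q=2/\beta$ and $\gamma=4/(2-\beta)$ is the intended one-line derivation. The arithmetic checks out.
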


\begin{lemma} \label{lem:volume_comparison} Let $(M,g)$ be an initial data set satisfying (\ref{eqn: decay assumptions integral decay Schwarzschild}). There is a constant $C' \geq 1$ depending only on $C$ such that for every ${r_0} \geq 1$ and every bounded measurable subset $\Omega \subset M$ one has that
\begin{eqnarray*}  |\CL^3_g(\Omega \setminus B_{r_0}) - \CL^3_{g_m}(\Omega \setminus B_{r_0})| \leq C' \left( \frac{3-\alpha}{\alpha - 1}\right)^{\frac{3 - \alpha}{3}} \CL^3_{g}(\Omega \setminus B_{r_0})^\frac{\alpha}{3} r_0^{1- \alpha} \end{eqnarray*} for every $\alpha \in (1, 3)$. 

\begin{proof}
The volume elements differ by terms $O(r^{-2})$. The estimate follows from the H\"older inequality and the fact that 
\begin{equation*}
  \int_{\R^3 \setminus B(0, {r_0})} r^{\frac{-2 \alpha}{3 - \alpha}} d \CL^3_\delta = \frac{3-\alpha}{3(\alpha - 1)} r_0^{\frac{3(1-\alpha)}{3 - \alpha}}
\end{equation*}
for $\alpha \in (1, 3)$. 
\end{proof}
\end{lemma}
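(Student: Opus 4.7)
The plan is to reduce the estimate to a pointwise comparison of volume forms on the asymptotic end followed by Hölder's inequality applied to an integrable radial weight. Since $r_0 \geq 1$, the set $\Omega \setminus B_{r_0}$ lies inside the coordinate chart where (\ref{eqn: decay assumptions integral decay Schwarzschild}) applies. First I would use that hypothesis, together with the fact that $(g_m)_{ij} = (1 + m/(2r))^4 \delta_{ij}$ differs from $\delta_{ij}$ by $O(r^{-1})$, to expand the determinants and obtain a pointwise bound
\[
\bigl| \sqrt{\det g(x)} - \sqrt{\det g_m(x)} \bigr| \leq C_1 \, r(x)^{-2} \quad \text{for all } |x| \geq 1,
\]
where $C_1$ depends only on $C$. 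Integrating this pointwise inequality gives
\[
\bigl| \CL^3_g(\Omega \setminus B_{r_0}) - \CL^3_{g_m}(\Omega \setminus B_{r_0}) \bigr| \leq C_1 \int_{\Omega \setminus B_{r_0}} r^{-2} \, dx.
\]

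Next I would apply Hölder's inequality with conjugate exponents $p = 3/(3-\alpha)$ and $q = 3/\alpha$, which is the unique choice that produces the exponent $1/q = \alpha/3$ on the volume factor:
\[
\int_{\Omega \setminus B_{r_0}} r^{-2} \, dx \leq \left( \int_{\Omega \setminus B_{r_0}} r^{-2p} \, dx \right)^{1/p} \CL^3_\delta(\Omega \setminus B_{r_0})^{1/q}.
\]
Under (\ref{eqn: decay assumptions integral decay Schwarzschild}) the Euclidean and $g$-volumes on $\Omega \setminus B_{r_0}$ are mutually bounded by each other (up to a constant depending only on $C$), so one may replace $\CL^3_\delta$ by $\CL^3_g$ on the right at the price of adjusting the constant.

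The remaining factor I would estimate by dominating the integrand by its monotone radial envelope and computing in polar coordinates:
\[
\int_{\R^3 \setminus B(0, r_0)} r^{-2p} \, dx = 4\pi \int_{r_0}^\infty r^{2-2p} \, dr = \frac{4\pi}{2p - 3} \, r_0^{3-2p},
\]
which converges precisely when $2p > 3$, equivalent to $\alpha > 1$. Substituting $p = 3/(3-\alpha)$ gives $2p - 3 = 3(\alpha - 1)/(3-\alpha)$ and $(3-2p)/p = 1-\alpha$, so raising to the power $1/p = (3-\alpha)/3$ yields
\[
\left( \int_{\Omega \setminus B_{r_0}} r^{-2p} \, dx \right)^{1/p} \leq (4\pi)^{(3-\alpha)/3} \left( \frac{3-\alpha}{3(\alpha - 1)} \right)^{(3-\alpha)/3} r_0^{1-\alpha}.
\]
Absorbing the numerical factors $C_1$, $(4\pi)^{(3-\alpha)/3}$, the $g$-vs-$\delta$ conversion constant, and $3^{-(3-\alpha)/3}$ into a single constant $C' \geq 1$ depending only on $C$, and combining the three displays, produces the claimed inequality.

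No step poses a genuine obstacle; the only care required lies in choosing the Hölder exponents so that the volume factor appears with exponent $\alpha/3$ and the radial integral converges, and in verifying that the resulting $r_0$-power simplifies to $1 - \alpha$ after the $1/p$-th root is taken. The range $\alpha \in (1,3)$ is precisely the range in which both the Hölder setup and the radial integral are meaningful.
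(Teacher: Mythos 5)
Your proof is correct and follows essentially the same route as the paper: the volume elements differ pointwise by $O(r^{-2})$ thanks to (\ref{eqn: decay assumptions integral decay Schwarzschild}), and H\"older's inequality with the conjugate exponents $p=3/(3-\alpha)$, $q=3/\alpha$ combined with the explicit radial integral gives the stated bound. Your evaluation of $\int_{\R^3\setminus B(0,r_0)} r^{-2p}\,d\CL^3_\delta = \frac{4\pi(3-\alpha)}{3(\alpha-1)}\,r_0^{3(1-\alpha)/(3-\alpha)}$ in fact makes precise the displayed integral in the paper's proof, which records the corresponding one-dimensional radial integral $\int_{r_0}^\infty r^{2-2p}\,dr$ (without the factor $4\pi$) under the label $d\CL^3_\delta$.
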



\section {Further results in H. Bray's thesis} \label{sec:Bray_thesis}

\subsection {$u_c(s)$ is increasing}
For the convenience of the reader, we reproduce H. Bray's proof that the function $u_c (s)$ in Section \ref{sec:volume_comparison} is increasing using our notation. This fact is all that H. Bray needed to show that the isoperimetric surfaces in Schwarzschild are the centered spheres.
\begin{lemma} [\protect{\cite[Lemma 2]{Bray:1998}}]
  Let $c>0$ and let $g_m^c = u_c(s)^{-2}ds^2 + u_c(s) s^2 g_{\S^2}$ be a smooth metric on $[c, \infty) \times \mathbb{S}^2$ with $u_c(c)=\alpha < 1$ and $\partial_s u_c|_c = 0$. Assume that $([c, \infty) \times \S^2, g_m^c)$ is isometric to the mean concave exterior region that lies beyond an umbilic constant mean curvature sphere of area $\alpha 4 \pi c^2$ in Schwarzschild of mass $m >0$. Then $u_c(s)\in(\alpha,1)$ for $s>c$ and $u_c$ is increasing in $s$.
\end{lemma}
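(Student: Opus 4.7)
The plan is to work with the area radius $h(s) := s\sqrt{u_c(s)}$ of the coordinate sphere $\{s\}\times\S^2$ rather than with $u_c$ itself. By Birkhoff's theorem and the hypothesis that $([c,\infty)\times\S^2, g_m^c)$ is isometric to the mean concave exterior of an umbilic constant mean curvature sphere in Schwarzschild of mass $m$, the Hawking mass of every coordinate sphere $\{s\}\times\S^2$ equals $m$. Since radial arc length along such spheres satisfies $d\ell = u_c^{-1}\,ds$ and the Schwarzschild area radius satisfies $dh/d\ell = \sqrt{1 - 2m/h}$ on the mean convex exterior, the constancy of the Hawking mass is a first integral of the second order scalar-curvature equation for $u_c$ and translates into the first order ODE
\begin{equation*}
  u_c(s)\,h'(s) \;=\; \sqrt{1 - 2m/h(s)},\qquad\text{equivalently}\qquad h'(s) \;=\; \frac{s^2\,\sqrt{1 - 2m/h(s)}}{h(s)^2}.
\end{equation*}
The initial conditions $u_c(c) = \alpha$ and $\partial_s u_c|_c = 0$ amount to $h(c) = c\sqrt\alpha$ together with the algebraic identity $c\sqrt\alpha(1-\alpha^3) = 2m$, obtained by equating the intrinsic mean curvature $H = u_c' + 2u_c/s$ of $\{s\}\times\S^2$ in $g_m^c$ with the Schwarzschild expression $2\sqrt{1-2m/h}/h$ at $s = c$. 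In particular $h(c) = c\sqrt\alpha > 2m$, and the ODE then forces $h' > 0$ throughout.

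First I would prove the upper bound $u_c(s) < 1$ on $(c,\infty)$, which is equivalent to $h(s) < s$. Since $\alpha < 1$, one has $h(c) = c\sqrt\alpha < c$. A first crossing argument rules out equality: if $s_0 > c$ were the smallest point at which $h(s_0) = s_0$, then $h - s$ would vanish at $s_0$ after being negative on $[c,s_0)$, so $(h-s)'(s_0) \geq 0$, i.e.\ $h'(s_0) \geq 1$. The ODE, however, gives $h'(s_0) = \sqrt{1 - 2m/s_0} < 1$ (with real square root since $s_0 > c > 2m$), a contradiction.

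Next I would establish $u_c'(s) > 0$ on $(c,\infty)$. Equating the two expressions for $H$ at general $s$ and substituting $u_c = h^2/s^2$, a short manipulation yields
\begin{equation*}
  u_c'(s) \,>\, 0 \qquad\Longleftrightarrow\qquad P(s) \,:=\, h(s)\bigl(1 - u_c(s)^3\bigr) \,>\, 2m.
\end{equation*}
The initial conditions force $P(c) = 2m$, and differentiation together with $u_c'(c) = 0$ gives $P'(c) = h'(c)(1 - \alpha^3) = \sqrt\alpha(1 - \alpha^3) > 0$, so $P > 2m$ immediately to the right of $c$. A first zero argument then finishes the proof: if $s_1 > c$ were the smallest value with $P(s_1) = 2m$, the equivalence above gives $u_c'(s_1) = 0$, so $P'(s_1) = h'(s_1)(1 - u_c(s_1)^3)$, which is strictly positive by the previous step (using $u_c(s_1) < 1$) and because $h'(s_1) > 0$ (from $h(s_1) > 2m$). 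This contradicts $P'(s_1) \leq 0$, so $P > 2m$ on all of $(c,\infty)$. Combined with $u_c(c) = \alpha$, this yields $u_c(s) \in (\alpha, 1)$ for every $s > c$.

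The main obstacle is spotting the correct monitor quantity $P(s) = h(s)(1 - u_c(s)^3)$: its sign controls that of $u_c'$, and the algebraic identity $c\sqrt\alpha(1-\alpha^3) = 2m$ forced by $u_c'(c) = 0$ precisely saturates $P \geq 2m$ at the initial point. Once $P$ is in hand, the monotonicity reduces to a clean first-crossing argument entirely parallel to the one used to prove $h < s$, both of them driven by the single first-integral ODE coming from constancy of the Hawking mass.
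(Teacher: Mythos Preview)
Your argument is correct and is essentially the same as the paper's: both work with the area radius $y(s)=h(s)=s\sqrt{u_c(s)}$ and use constancy of the Hawking mass as a first integral, first establishing $u_c<1$ and then monotonicity. The paper obtains $u_c<1$ from the differential inequality $(s^3-y^3)'\geq 0$ (a consequence of $y^4 y'^2\leq s^4$), whereas you run a first-crossing argument on $h<s$; for monotonicity the paper shows directly that $u_c''>0$ at every critical point by differentiating the first integral, while your monitor $P(s)=h(s)(1-u_c(s)^3)$ together with the equivalence $u_c'>0\iff P>2m$ repackages the very same computation as a first-crossing argument on $P=2m$. The substance is identical; your presentation has the mild advantage that the explicit ODE $u_c\,h'=\sqrt{1-2m/h}$ makes the sign bookkeeping (in particular $h>2m$ and $h'>0$ throughout) transparent.
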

\begin{proof}
  The Hawking mass is  a first integral for the second order ordinary differential equation that $u_c$ is required to satisfy so that the metric 
  $g_m^c = u_c^{-2} ds^2 + u_c s^2 g_{\S^2}$ is scalar flat. Up to a
  positive multiplicative constant, the Hawking mass of $\{s\} \times \S^2$ with respect to $g_m^c$ is given by
  \begin{equation}
    y(s) \left( 1 - \frac{y(s)^4 y'(s)^2}{s^4}\right).  \label{eqn: Hawking mass}
  \end{equation}
  Here, $y(s) := \sqrt {u_c(s) s^2}$ and  the prime denotes differentiation with respect to $s$. It follows that $(s^3 - y^3)' \geq
  0$ and hence $1 - \left(\frac{c}{s}\right)^3 (1 - \alpha^\frac{3}{2})
  \geq u_c(s)^\frac{3}{2}$. We see that $u_c(s) < 1$ for all $s \geq c$. Assume that there is an $s \in [c, \infty)$ such that $u_c'(s) = 0$. For such $s$, we have that
  $y'(s) = \sqrt{u_c (s)}$ and $y''(s) = \frac{s}{2 \sqrt{u_c(s)}} u_c''(s)$. Differentiating the constant Hawking mass (\ref{eqn: Hawking mass}), we obtain that
  \begin{equation*}
    y''(s) = \frac{(1 - u_c(s)^3) s^{\frac{3}{2}}}{2 u_c(s)^{\frac{5}{2}}}.
  \end{equation*}
  Since we already know that $u_c(s) < 1$, we obtain that $y''(s) > 0$ and hence $u_c''(s) >0$ for every $s \geq c$ such that $u_c'(s) = 0$. This implies that $u_c(s)$ is increasing.  
\end{proof}

\subsection{Isoperimetric surfaces in compact perturbations of \\ Schwarzschild}
\label{sec:isop-surf-comp}
In \cite[Section 2.6]{Bray:1998}, H. Bray shows that in an initial data set
$(M,g)$ that is identically Schwarzschild outside a compact set, the large umbilic constant mean curvature spheres are isoperimetric surfaces for the volume they enclose with the horizon of $(M, g)$.  We briefly outline H. Bray's argument: 

Fix a large centered coordinate sphere $S_r$ that lies in the Schwarzschild part
of the manifold. As explained in Section \ref{sec:volume_comparison}, one can construct a manifold $((0, \infty) \times \mathbb{S}^2 , g_m^c)$ by gluing the tip of a cone to the sphere $S_r$ in Schwarzschild in such a way that both the metric and the mean curvature match. H. Bray then constructs an area non-increasing map $\phi: (M,g) \to ((0, \infty) \times \mathbb{S}^2,g_m^c)$ so that the sphere $S_r$ in $(M,g)$ is mapped
isometrically onto $\{c\} \times \mathbb{S}^2$ in $((0, \infty) \times \mathbb{S}^2 ,g_m^c)$ and such that
$\phi$ is an isometry outside of
$S_r$ (respectively $\{c\} \times \mathbb{S}^2$). The construction of $\phi$ on the remainder of $M$ starts at $S_r$ and proceeds
inwards incrementally. In the spherically symmetric part of $(M,g)$, one chooses
$\phi$ to be also spherically symmetric and such that $\phi$ decreases area as little as possible. This requirement leads to an ordinary differential equation for
the stretching of the spheres that lie inside of $S_r$. The analysis of this ordinary differential equation
then yields that if $r$ is sufficiently large, a certain sphere that is still outside the compact perturbation and hence in the spherically symmetric part of $(M, g)$ gets
mapped to the tip of the cone. In particular, all of the non-Schwarzschild part of $(M, g)$ is mapped to the vertex of the
cone.

Since $\phi$ is area non-increasing inside of $S_r$, it is also volume
non-in\-creas\-ing. This implies that any other surface $\Sigma$ in
$(M,g)$ which contains at least as much volume as $S_r$ has larger
area: use $\phi$ to map $S_r$ and $\Sigma$ to $((0, \infty) \times \mathbb{S}^2,g_m^c)$ and use
 that $S_r$ is (outer) isoperimetric in $((0, \infty) \times \mathbb{S}^2,g_m^c)$.

H. Bray's technique to identify the isoperimetric surfaces of Schwarzschild has been generalized to a certain class of rotationally symmetric manifolds in \cite{Bray-Morgan:2002}, and further in \cite{Maurmann-Morgan:2009}. See also the comment before the statement of Theorem 2.6 in \cite{Maurmann-Morgan:2009} for a clarification of the hypotheses in \cite{Bray-Morgan:2002}.

  
\section{Remark on locally isoperimetric surfaces}  \label{sec:minimizing_limit}

In this appendix we show that an unbounded minimal surface is area-minimizing if it is the smooth limit of isoperimetric surfaces. This observation is used in Section \ref{sec:general_asymptotics}. 

The proof follows from the same (classical) techniques that establish the regularity of isoperimetric surfaces. We include details for completeness and clarity. We deliberately phrase the proof in non-technical terms  to help those readers who are not experts in geometric measure theory. \\

The regularity of rectifiable boundaries that minimize area with respect to a volume constraint was established in \cite{Giusti:1981} and \cite{Gonzalez-Massari-Tamanini:1983}. Implicitly, this result is already contained in  \cite{Almgren:1976}; cf. the remarks in the introduction of \cite{Morgan:2003}. The papers \cite{Giusti:1981, Gonzalez-Massari-Tamanini:1983} both rely on De Giorgi's method. The ways they go about dealing with the volume constraint are very different, however. In \cite{Giusti:1981}, a perturbation vector field is used to adjust the volume of a region by a small given amount while changing the area of its boundary in a controlled way. Morally, we follow the approach of \cite{Giusti:1981} closely in this appendix. In \cite{Gonzalez-Massari-Tamanini:1983}, it is shown that there exist open balls in an isoperimetric region as well as its complement, so that small volume can be added or deleted in a controlled way. 

We refer the reader to \cite{Morgan:2003}, in particular to Proposition 3.1 therein, for the development of the regularity theory for isoperimetric surfaces in Riemannian manifolds. There, further important references to the literature can be found. We also refer the reader to the paper \cite{Morgan-Ros:2010}, which contains useful observations regarding locally isoperimetric regions and additional references.    \\
 
Let $(M, g)$ be an initial data set as in Definition \ref{def:initial_data_sets}. In particular, $(M, g)$ is homogeneously regular, i.e. its curvature tensor is bounded and its injectivity radius is bounded below; cf. \cite[remarks below Theorem 3]{Schoen-Simon:1981}.  

Let $\Omega \subset M$ be a smooth region that minimizes area with respect to compactly supported volume-preserving deformations, i.e. for every region $\Omega' \subset M$ such that $\Omega  \Delta \Omega' \Subset B$ where $B$ is bounded and open in $M$ and such that $\CL^3_g (B \cap \Omega) = \CL_g^3 (B \cap \Omega')$ one has that $\H^2_g(B \cap \partial \Omega) \leq \H^2_g(B \cap \partial \Omega')$. In addition, we assume that $\partial \Omega$ is minimal and unbounded. 

Using the monotonicity formula in the form \cite[Section 5]{Schoen-Simon:1981} and elementary comparison arguments, one obtains that 
\begin{eqnarray}
\H^2_g(\partial \Omega) &=& \infty, \nonumber \\
\limsup_{s \to \infty} s^{-2} \H^2_g(B_s \cap \partial \Omega) &<& \infty, \nonumber  \\ 
\liminf_{s \to \infty} \frac{\H^2_g (B_{s+1} \cap \partial \Omega )}{ \H^2_g (B_s \cap \partial \Omega)} &=& 1. \label{volumegrowthestimate}
\end{eqnarray}
Given $s, r \geq 1$ with $s \geq r$, we let $A_{r, s}:= B_s \setminus B_r$.   

\begin{proposition} Let $(M, g)$ and $\Omega \subset M$ be as above. Then $\partial \Omega$ is area-minimizing. 
\begin{proof}
Let $\nu$ be the outward unit normal  field of $\partial \Omega$. Let $\exp$ denote the exponential map of $(M, g)$. A variation of the proof of \cite[Proposition 5]{Ros:2005} shows that the curvature of $\partial \Omega$ is bounded and that there exists $\varepsilon \in (0, \frac{1}{2})$ small such that the map $E : \partial \Omega \times (- \varepsilon, \varepsilon) \to M$ defined by $E(\sigma, t) = \exp_\sigma (t \nu(\sigma))$ is a diffeomorphism with its image. The constant $\varepsilon \in (0, \frac{1}{2})$ can be chosen so that for some $C\geq 1$, the following holds: 

Let $f \in \C_c^1 (\partial \Omega)$ be such that $0 \leq f \leq \varepsilon /2$. Let $ \Omega_f $ be the compact region bounded by $\{\exp_\sigma f(\sigma) : \sigma \in \partial \Omega\}$. Let $W_f := \{x \in M : \dist_g(x, \supp (f)) < 1\}$. Then 
\begin{eqnarray}
&& C  \H^2_g (W_f\cap \partial \Omega)\sup_{\sigma \in \partial \Omega }  f (\sigma)  \geq       \CL^3_g (W_f \cap  \Omega_f ) - \CL^3_g (W_f \cap  \Omega ) \nonumber   \\ 
& \geq&  \frac{1}{C} \H^2_g(U  \cap \partial \Omega) \inf_{\sigma \in U \cap \partial \Omega} f (\sigma) \text{ for every open } U \subset \supp (f).  \label{eqn:change_of_volume} 
\end{eqnarray}
Using also that $\partial \Omega$ is minimal,  
\begin{eqnarray} \label{eqn:change_of_area}
| \H^2_g ( W_f \cap \partial \Omega_f ) - \H^2_g (W_f\cap \partial \Omega) |   \leq C \int_{\partial \Omega} (f^2 + |\nabla f|^2).
\end{eqnarray}

Let $\Omega' \subset M$ be a region with $\Omega' \Delta \Omega \Subset M$. Let $r \geq 1$ be such that  $\Omega' \Delta \Omega \subset B_r $. Let $$ \Delta V := \CL^3_g( \Omega \cap B_r) - \CL^3_g(\Omega' \cap B_r).$$ Assume that $\Delta V>0$. (The discussion when $\Delta V < 0$ is analogous.) 

Since $\H^2_{g} (\partial \Omega) = \infty$, we have that $\frac{\varepsilon}{3 C}\H^2_g ( A_{r+2, s-1} \cap \partial \Omega) \geq \Delta V$ if $s$ is sufficiently large. By (\ref{volumegrowthestimate}), there exists a sequence $s_i \to \infty$ such that the quotients of $\H^2_g(A_{r, s_i+1} \cap \partial \Omega)$ and $\H^2_g(A_{r+2, s_i-1} \cap \partial \Omega)$ are close to one. 

Given $\delta_i \in (0, \varepsilon/2)$, let $f \in \C_c^2(\partial \Omega)$ with $\supp (f) \subset A_{r+1, s_i}$ be such that $0 \leq f \leq \delta_i$ and $|\nabla f| \leq 2 \delta_i$, and such that  $f = \delta_i$ on $A_{r+2, s_i-1}$. Using (\ref{eqn:change_of_volume}) with $U = A_{r+2, s_i-1}$, we see that $\CL^3_g ( \Omega_f \cap A_{r, s_i+1} ) - \CL^3_g ( \Omega \cap A_{r, s_i+1}) =  \Delta V$ for a choice of $\delta_i  \sim \frac{\Delta V}{\H^2_g( A_{r+1, s_i} \cap \partial \Omega)}$. From (\ref{eqn:change_of_area}), we obtain that $$ |\H^2_g ( B_{s_i +1} \cap \partial \Omega_f ) - \H^2_g (B_{s_i+1} \cap \partial \Omega) | = O \left(\frac{(\Delta V)^2}{\H^2_g(A_{r+1, s_i} \cap \partial \Omega )}\right).$$ 
Let $\tilde \Omega \subset M$ be the region such that $\tilde \Omega  \setminus B_r = \Omega_f \setminus B_r$ and such that $\tilde \Omega \cap B_r = \Omega' \cap B_r$. Then $\CL^3_g( B_{s_i+1} \cap \tilde \Omega) = \CL_g^3 (B_{s_i+1} \cap \Omega)$. Then
\begin{equation*}
  \begin{split}
    \H^2_g (B_{s_i+1} \cap \partial \Omega)
    &\leq
    \H^2_g (B_{s_i+1} \cap \partial \tilde \Omega)
    \\
    &= \H^2_{g} (B_{s_i+1} \cap \partial \Omega ') + O
    \left(\frac{(\Delta V)^2}{\H^2_g(A_{r+1, s_i} \cap \partial \Omega)
      }\right). 
  \end{split}
\end{equation*}
The last term  tends to zero as $i \to \infty$. Since $\Omega \setminus B_r = \Omega' \setminus B_r$, we see that $\H^2_g(B_r \cap \partial \Omega) \leq \H^2_g (B_r \cap \partial \Omega')$. 
\end{proof}
\end{proposition}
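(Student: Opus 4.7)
The goal is to upgrade the hypothesis that $\partial\Omega$ is locally area-minimizing under a volume constraint to the stronger conclusion that it is area-minimizing without constraint. The plan is to show that any volume imbalance introduced by a compactly supported competitor can be corrected arbitrarily far out in the asymptotic region at negligible area cost, using the infinite area and quadratic area growth of $\partial\Omega$.

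First I would record the needed geometric preliminaries for $\partial\Omega$. Since $\partial \Omega$ is a smooth locally isoperimetric minimal surface in a homogeneously regular manifold, one has uniform curvature bounds (by standard regularity, cf.\ \cite[Proposition 5]{Ros:2005}), so there exists $\eps \in (0,\tfrac12)$ such that the normal exponential map $E(\sigma,t) = \exp_\sigma(t\nu(\sigma))$ is a diffeomorphism of $\partial\Omega \times (-\eps,\eps)$ onto a tubular neighborhood. For normal graph perturbations $\Omega_f$ corresponding to functions $0 \le f \le \eps/2$ with support in $W_f = \{d_g(\cdot,\supp f) < 1\}$, I would record the two key quantitative estimates: the volume change is comparable to $\int_{\partial\Omega} f \,d\H^2_g$ with two-sided constants depending only on the tube, while the area change is bounded by $C \int_{\partial\Omega}(f^2 + |\nabla f|^2)$ because $\partial\Omega$ is minimal (so the linearization of area vanishes).

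Next I would use the elementary comparison and monotonicity facts to extract the growth properties $\H^2_g(\partial\Omega) = \infty$, $\limsup s^{-2} \H^2_g(B_s \cap \partial\Omega) < \infty$, and, crucially, a sequence $s_i \to \infty$ along which $\H^2_g(B_{s_i+1} \cap \partial\Omega)/\H^2_g(A_{r+2,s_i-1} \cap \partial\Omega) \to 1$. Given a competitor $\Omega'$ with $\Omega' \Delta \Omega \subset B_r$ producing volume defect $\Delta V := \mathcal L^3_g(\Omega \cap B_r) - \mathcal L^3_g(\Omega' \cap B_r)$ (WLOG $\Delta V > 0$), I would pick a cutoff $f \in \C_c^2(\partial\Omega)$ supported in $A_{r+1,s_i}$, equal to a constant $\delta_i \sim \Delta V / \H^2_g(A_{r+1,s_i}\cap\partial\Omega)$ on $A_{r+2,s_i-1}$, with gradient controlled by $2\delta_i$. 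By the volume estimate, $\Omega_f$ exactly compensates the volume defect; by the area estimate, its area cost is $O\bigl((\Delta V)^2 / \H^2_g(A_{r+1,s_i}\cap\partial\Omega)\bigr)$.

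Finally, I would form the cut-and-paste set $\tilde\Omega$ that agrees with $\Omega'$ inside $B_r$ and with $\Omega_f$ outside $B_r$. Since $\mathcal L^3_g(B_{s_i+1}\cap\tilde\Omega) = \mathcal L^3_g(B_{s_i+1}\cap\Omega)$, the locally isoperimetric property of $\Omega$ applied in $B_{s_i+1}$ yields $\H^2_g(B_{s_i+1}\cap\partial\Omega) \le \H^2_g(B_{s_i+1}\cap\partial\tilde\Omega)$; subtracting the common contribution from outside $B_r$ and sending $i \to \infty$ makes the $O\bigl((\Delta V)^2/\H^2_g(A_{r+1,s_i}\cap\partial\Omega)\bigr)$ error vanish, yielding the desired area comparison on $B_r$. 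The main obstacle is arranging the annular perturbation so that the volume can be moved exactly by the prescribed amount at an $L^\infty$ scale small enough to stay inside the tubular neighborhood while simultaneously giving quadratic-in-$\delta_i$ area cost; the growth property extracted from the area monotonicity along a subsequence $s_i$ is what makes this work.
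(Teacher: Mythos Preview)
Your proposal is correct and follows essentially the same route as the paper: tubular-neighborhood bounds giving linear volume change and quadratic area change (the latter because $\partial\Omega$ is minimal), the extraction of a subsequence $s_i$ along which the annular area ratios are close to one, the plateau cutoff $f\equiv\delta_i$ on $A_{r+2,s_i-1}$ to restore the volume defect, and the cut-and-paste comparison in $B_{s_i+1}$ followed by $i\to\infty$. You have also correctly identified the role of the ratio condition on $s_i$, which is precisely what allows the bound $\delta_i \sim \Delta V/\H^2_g(A_{r+1,s_i}\cap\partial\Omega)$ and hence the vanishing of the $O\bigl((\Delta V)^2/\H^2_g(A_{r+1,s_i}\cap\partial\Omega)\bigr)$ area error.
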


\bibliographystyle{amsplain}
\bibliography{references}

\end{document}